\documentclass[11pt, reqno, a4paper
]{amsart}

\usepackage{amsthm,amssymb,mathrsfs}
\usepackage{microtype,xspace,calc}
\usepackage{enumerate}
\usepackage{amsfonts}
\font\rus=wncyr10

\usepackage[paper=a4paper,text={14.5cm,22cm}]{geometry}
\usepackage[colorlinks=true, citecolor=blue, urlcolor=blue, linkcolor=blue]{hyperref}

 \title[Derivation representation of mixed elliptic motive~Lie algebra]
{On the derivation representation of the fundamental 
Lie algebra of mixed elliptic motives}

\author{S. Baumard and L. Schneps}
\date{\today}

\theoremstyle{plain}
\newtheorem{theo}{Theorem}[section]
\newtheorem{lem}[theo]{Lemma}
\newtheorem{prop}[theo]{Proposition}
\newtheorem{cor}[theo]{Corollary}
\newtheorem{conj}[theo]{Conjecture}

\theoremstyle{definition}
\newtheorem{defi}[theo]{Definition}
\newtheorem*{defis}{Definitions}

\theoremstyle{remark}

\newcommand{\seg}[1]{\mathopen{[\![} #1 \mathclose{]\!]}}

\newcommand{\nlineshort}{{}\nonumber\\&{}}
\newcommand{\nline}{\nlineshort \hspace*{2em}}
\newcommand{\pol}{\textnormal{pol}}
\newcommand{\den}[2]{\De_{#1}({#2})}

\newcommand{\De}{\hbox{\rus{D}}}
\newcommand{\E}{\mathscr{E}}

\DeclareMathOperator{\sham}{\textnormal{sha}}
\DeclareMathOperator{\Lie}{Lie}

\renewcommand{\phi}{\varphi}
\newcommand{\eps}{\varepsilon}

\renewcommand{\geq}{\geqslant}
\renewcommand{\leq}{\leqslant}
\newcommand{\Q}{\mathbb{Q}}
\newcommand{\Z}{\mathbb{Z}}

\newcommand{\chev}[1]{\langle #1 \rangle}

\newcommand{\grt}{\mathfrak{grt}}

\newcommand{\pls}{\mathfrak{pls}}
\newcommand{\ls}{\mathfrak{ls}}

\DeclareMathOperator{\Der}{Der}
\renewcommand{\sl}{\mathfrak{sl}_2}

\DeclareMathOperator{\muu}{mu}
\DeclareMathOperator{\ma}{ma}
\DeclareMathOperator{\ARI}{ARI}
\DeclareMathOperator{\ari}{ari}
\DeclareMathOperator{\arit}{arit}
\DeclareMathOperator{\Darit}{Darit}
\DeclareMathOperator{\lu}{lu}
\DeclareMathOperator{\Da}{Da}
\DeclareMathOperator{\da}{da}
\DeclareMathOperator{\swap}{\text{\textnormal{swap}}}
\DeclareMathOperator{\push}{\text{\textnormal{push}}}
\DeclareMathOperator{\ad}{\textnormal{ad}}
\newcommand{\MEM}{\mathsf{MEM}}
\newcommand{\MTM}{\mathsf{MTM}}
\DeclareMathOperator{\SL}{SL}

\newcommand{\uf}{\mathfrak{u}}
\renewcommand{\leq}{\leqslant}
\renewcommand{\geq}{\geqslant}
\renewcommand{\ge}{\geq}
\renewcommand{\le}{\leq}
\newcommand{\sing}{\textnormal{sing}}
\newcommand{\al}{\textnormal{al}}

\allowdisplaybreaks[3]

\newcommand\vartextvisiblespace[1][.5em]{%
  \makebox[#1]{%
    \kern.07em    \vrule height.3ex    \hrulefill    \vrule height.3ex    \kern.07em}}

\hyphenation{Theo-rem com-mu-ta-ti-ve}

\newcommand{\Rrel}[1]{\mathrm{R}_{#1}}
\newcommand{\swapbar}{\overline}

\begin{document}
\thispagestyle{empty}

\begin{abstract}
Richard Hain and Makoto Matsumoto constructed a category of universal
mixed elliptic motives, and described the fundamental~Lie algebra of this
category: it is a semi-direct product of the fundamental~Lie algebra
$\Lie\pi_1(\MTM)$ of the category of mixed Tate motives over~$\Z$ 
with a filtered and graded~Lie algebra~$\uf$. This~Lie algebra, and in
particular~$\uf$, admits a representation as derivations of the free~Lie
algebra on two generators.  In this paper we study the image~$\E$ of this
representation of~$\uf$, starting from some results~by Aaron~Pollack, 
who determined all the relations in a certain filtered quotient of~$\E$, 
and gave several examples of relations in low weights in~$\E$ that are 
connected~to period polynomials of
cusp forms on~$\SL_2(\Z)$.~Pollack's examples lead~to a conjecture on the 
existence of such relations in all depths and all weights, that we state in 
this article and prove in depth~$3$ in all weights.  The proof follows quite
naturally from Ecalle's theory moulds, to which we give a brief introduction. 
We prove two useful general theorems on moulds in the appendices.
\end{abstract}

\maketitle


\section{Introduction}

\subsection{Motivation}

In the unpublished text \cite{HainMatsumoto}, Hain and Matsumoto define a 
Tannakian category~$\MEM$ of mixed elliptic motives, and give a partially
conjectural description of its fundamental~Lie algebra. The elements of 
this~Lie algebra satisfy certain relations coming from modular forms that 
seem~to be related~to other natural appearances of modular forms in the theory 
of multiple (particularly double) zeta values; see \cite{IKZ,Speriodpolys} for the situation in the dual algebra, or ~\cite{HainToDeligne} for a 
cohomological explanation of this phenomenon.

Hain and Matsumoto show that the~Lie algebra~$\Lie\pi_1(\MEM)$ is a 
semi-direct product ~$\uf\rtimes \Lie\pi_1(\mathsf{\MTM})$, where the right-hand
factor is the~Lie algebra of the pro-unipotent radical of the fundamental group 
of the category of mixed Tate motives over~$\Z$ and~$\uf$ is a weight-graded 
Lie algebra equipped with a depth filtration, related~to
$\SL_2(\mathbb Z)$. 

Hain and Matsumoto construct a representation~$\Lie\pi_1(\MEM)\to\Der
\Lie[a,b]$. This representation is known~to be injective on the subalgebra 
$\Lie\pi_1(\MTM)$\footnote{This result follows from combining standard
results from the literature. Indeed, Goncharov constructed a Hopf algebra
${\mathcal{MZ}}$ of motivic multiple zeta values, and it follows from work
of F.~Brown (\cite{realBrown}) that $\Lie\pi_1(\MTM)$ is isomorphic to the dual of 
the quotient of ${\mathcal{MZ}}$ modulo products. Goncharov also showed that
elements of ${\mathcal{MZ}}$ satisfy the associator relations, which by 
duality gives an injective map $\Lie\pi_1(\MTM)
\hookrightarrow\grt$ into the Grothendieck-Teichm\"uller Lie algebra.
Finally, in \cite{Enriquez}, Enriquez constructed an explicit injective
map from $\grt$ into ${\mathrm{Der}}\,\Lie[a,b]$. The injectivity 
of the map $\Lie\pi_1(MTM)\hookrightarrow {\rm Der}\,\Lie[a,b]$
also appears as theorem 3.1 in the recent preprint \cite{Brown3}, where it has a very different 
proof based on substantial work by Hain and Levin-Racinet.} and conjectured 
to be injective on~$\uf$. The image 
of~$\uf$, denoted~$\E$, is equipped with a natural weight-grading and depth 
filtration, but is far from free.  It was studied~by Aaron~Pollack~\cite{Pollack},
 who defined a filtration~$\Theta$ on~$E$ different from the
depth filtration, and classified all relations in the quotient~$\E/\Theta^3\E$,
showing that in each weight~$n$ and depth~$d>1$, these relations are in
bijection with the period polynomials of the same parity as~$d$ associated~to 
modular forms on~$\SL_2(\Z)$.

This leads~to two natural questions. Firstly, one may ask whether 
Pollack's relations in~$\E/\Theta^3\E$ lift~to actual relations in the~Lie 
algebra~$\E$. ~Pollack computed several examples of such relations in low 
weights, and his observations on these examples lead~to a natural hypothesis
that we state as Conjecture~\ref{mainconj} at the end of this section.  
The conjecture is trivial in depth 2 since a depth 2 relation in 
$\E/\Theta^3\E$ also holds in~$\E$.  The goal of this paper is~to prove 
the conjecture in depth~$3$\footnote{The same statement appears without
proof in the online lecture notes \cite{Bnotes} by F. Brown.}.

The second natural question is, in the absence of knowledge about the 
injectivity of the map~$\uf\rightarrow\E$, whether~Pollack's relations in~$\E$ 
lift~to relations in~$\uf$, i.e. whether they are ``motivic''.  Hain recently
proved that~Pollack's depth 2 relations are motivic, but it remains an open
question for~$d\ge 3$.

The methods we use~to prove Conjecture~\ref{mainconj} are based on the
passage from non-com\-mu\-ta\-tive to commutative variables via techniques
from Ecalle's theory of moulds,~to which we give a very brief introduction
in section~\ref{sec3}.
The remarkable advantage of using moulds is that the proof involves 
not just polynomials such as elements of~$\Lie[a,b]$, but rational 
functions whose denominators play a very useful role.

\medskip
\noindent\textbf{Acknowledgments.} The first author thanks Cl\'ement~Dupont for
useful discussions. We also thank Richard~Hain for pointing out
an ambiguity in an early version of this paper, as well as the anonymous
referee for pointing out that the original proof was more complicated than necessary.

\subsection{The~Lie algebra~\texorpdfstring{$\E$}{E}: definition and statement of results}

The free~Lie algebra~$\Lie[a,b]$ is graded~by the \emph{weight}, i.e. the
degree of polynomials in~$a$ and~$b$, and~by the \emph{depth}, i.e. the
minimum number of~$b$'s in any monomial.

The weight-grading induces a grading on the~Lie algebra~$\Der\Lie[a,b]$
of derivations of~$\Lie[a,b]$: a derivation is of weight~$n$ if the image 
of~$a$ and~$b$ are of weight~$n+1$. The depth-grading induces a grading 
on $\Der\Lie[a,b]$: a derivation~$D$ is of depth ~$r$ if~$D(a)$ is 
of homogeneous depth ~$r$ and~$D(b)$ of homogeneous depth ~$r+1$.

For any word~$w=w_1\cdots w_r$ with~$w_i\in \{a,b\}$, and any
$g\in \Lie[a,b]$, we write 
$w\cdot g=\ad(w_1)\cdots \ad(w_r)(g)$. Hain and Matsumoto 
showed that the image 
$\E$ of~$\uf$ is generated~by the derivations~$\eps_{2i}$ defined for 
$i\ge 0$~by
\[
  \eps_{2i}(a) = a^{2i}\cdot b\quad\text{and}\quad \eps_{2i}(b) =
\sum_{j=0}^{i-1}\,(-1)^j\, [a^{j}\cdot b,a^{2i-1-j}\cdot b].
\]
The~$\eps_{2i}$ all satisfy the relation~$\eps_{2i}([a,b])=0$. 
The derivation~$\eps_2$ commutes with all the others, so it plays no role
in our investigation of relations in~$\E$.

The relations between the brackets of derivations that are the subject of 
Pollack's work~\cite{Pollack} are more intricate than it might seem
from their simple definition.
In this article, we will concentrate on depth~$3$ relations, namely relations
between derivations of the form~$[\eps_{2i},[\eps_{2j},\eps_{2k}]]$ in~$\E$.

\medskip
Let~$\sl\subset \Der\Lie[a,b]$ be the~Lie algebra generated~by 
$\eps_0$ and a second derivation~$\phi_0$ defined~by~$\phi_0(a)=0$,
$\phi_0(b)=a$. Elements of~$\E$ that commute with~$\phi_0$ are called 
elements of {\it highest weight} of
$\E$.  The algebra~$\E$ is also equipped with a filtration denoted 
$\Theta$ (see~\hbox{\cite[p.~5--7]{Pollack}}), which is the filtration induced~by the 
descending central series filtration on the subalgebra of~$\Lie[a,b]$ 
generated~by the ~$w\cdot [a,b]$ with ~$w=a^ib^j\in\Q\chev{a,b}$.  

We can now give a precise formulation of~Pollack's main theorem. 
Let~$\E_2\subset\E$ be the subspace spanned~by elements of the form
\[[\eps_0^i\cdot\eps_p, \eps_0^j\cdot \eps_q],\ \ p,q>2\ {\rm even}.\]  
Then for all depths~$d>1$,~Pollack shows that the depth~$d$ elements

\[
 h_{p,q}^{d} =
\sum_{i+j=d-2}(-1)^i\tfrac{(d-2)!}{\binom{p}{i}\binom{q}{j}}\,[\eps_0^{i}\cdot
\eps_{p+2},\eps_{0}^{j}\cdot \eps_{q+2}].
\]
span the highest weight part of the subspace~$\E_2$.
Pollack's main theorem identifies the relations between these elements
in the quotient~$\E/\Theta^3\E$:

\begin{theo}[{\cite[Thm.~2]{Pollack}}] \label{theopollack}
Let~$D$ be a highest weight element of~$\E_2$, of weight~$n$ and 
depth~$d$. Then~$D\equiv 0\ [\Theta^3\E]$ if and only if
it is of the form 
\[\Rrel{f,d}= \sum_{p+q=n-4} r_{p-d+2}(f) \,h_{p,q}^d,\] 
where ~$r_i(f)$ is the ~$i$-th period~(in the sense of~\cite[\S\,11]{MR}) 
of a modular form~$f$ of weight~$n-2d+2$ on~$\SL_2(\Z)$.
\end{theo}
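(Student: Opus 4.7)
The plan is to linearise the problem by passing to the associated graded $\gr_\Theta \E$. In the quotient $\Theta^2 \E / \Theta^3 \E$, the Lie bracket becomes skew-bilinear modulo higher $\Theta$-terms, so that a commutator $[\eps_0^i \cdot \eps_{p+2},\eps_0^j \cdot \eps_{q+2}]$ admits a faithful representative as an $X\leftrightarrow Y$-antisymmetric polynomial in two commuting variables. I would construct an injective linear map
\[
  \psi : (\Theta^2 \E / \Theta^3 \E)_{n,d} \hookrightarrow V_{n,d}
\]
into a suitable space $V_{n,d}$ of antisymmetric bi-polynomials. The key dictionary is given by the defining formula $\eps_{2k}(b) = \sum_j (-1)^j [a^j \cdot b, a^{2k-1-j} \cdot b]$: the two slots of the outer bracket are assigned the variables $X$ and $Y$, so that after antisymmetrisation $\eps_{2k}$ corresponds to the polynomial $(Y^{2k}-X^{2k})/(X+Y)$, the outer bracket becomes $X\leftrightarrow Y$ antisymmetrisation, and the $\ad(\eps_0)$-action transforms the polynomial representative in a controlled way governed by the $\sl$-structure.

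Next I would compute $\psi(h^d_{p,q})$ in closed form. The binomial weights $\binom{p}{i}^{-1}\binom{q}{j}^{-1}$ and the factor $(d-2)!$ in Pollack's definition are precisely those needed so that the inner sum telescopes, after antisymmetrisation, into an expression of the shape
\[
  (X-Y)^{d-2}\bigl(X^{p-d+2} Y^{q-d+2} - X^{q-d+2}Y^{p-d+2}\bigr)
\]
(up to an overall normalisation and a polynomial factor common to all $h^d_{p,q}$ of given $n,d$). The highest-weight condition $[\phi_0, D] = 0$ translates into $\sl$-equivariance on the polynomial side; combined with the $(X-Y)^{d-2}$ factor it reduces the image of the span of the $h^d_{p,q}$ to a space isomorphic to antisymmetric bi-polynomials $Q(X,Y)$ of bidegree $n-2d$.

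The final step is to match kernels with period polynomials. A linear combination $\sum c_{p,q} h^d_{p,q}$ lies in $\Theta^3 \E$ iff the associated bi-polynomial $Q_c(X,Y)=\sum c_{p,q}(X^{p-d+2}Y^{q-d+2}-X^{q-d+2}Y^{p-d+2})$ satisfies the two Eichler-Shimura functional equations
\[
  Q(X,Y)+Q(Y,-X-Y)+Q(-X-Y,X)=0, \qquad Q(X,Y)+Q(-Y,X)=0,
\]
for the generators $S$ and $U=ST$ of $\SL_2(\Z)$. By the classical Eichler-Shimura theorem these equations cut out exactly the period polynomials of cusp forms of weight $n-2d+2$ on $\SL_2(\Z)$, with parity selected by the parity of $d$; reading off the coefficients of $r_f(X)=\sum_i r_i(f)X^i$ then recovers the coefficient $r_{p-d+2}(f)$ in front of $h^d_{p,q}$ in $\Rrel{f,d}$.

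The hardest part will be establishing both the injectivity of $\psi$ on the relevant subspace and the precise identification of its kernel with the two Eichler-Shimura functional equations. This is not formal: it requires a careful analysis of the descending central series of the subalgebra of $\Lie[a,b]$ generated by the $w \cdot [a,b]$, together with a dimension count in $\gr_\Theta \E$ which rules out spurious relations. This combinatorial core is where Pollack's argument does its real work, and it is also exactly the obstruction that makes the lift from $\E/\Theta^3\E$ to $\E$ itself (the subject of the rest of this paper) nontrivial.
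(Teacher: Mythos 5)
The paper does not actually prove Theorem~\ref{theopollack}: it is stated verbatim with the citation \cite[Thm.~2]{Pollack} and taken as an input to the rest of the article (the new content of the paper begins only with the question of lifting Pollack's relations from $\E/\Theta^3\E$ to~$\E$). So there is no proof in the paper against which your attempt can be compared.

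That said, your outline is broadly aligned with the strategy one would expect (and which Pollack's thesis carries out): pass to the graded piece $\Theta^2\E/\Theta^3\E$, convert derivations into antisymmetric polynomials in two commutative variables, show the normalising constants in $h^d_{p,q}$ collapse each of these into a monomial in a common $(X-Y)^{d-2}$-shifted basis, and then recognise the kernel condition as the Eichler--Shimura functional equations for period polynomials. But as written this is a sketch, not a proof: you explicitly defer (i) the construction and injectivity of the map $\psi$ on the relevant graded piece, (ii) the verification that the binomial normalisations really telescope to the claimed closed form, and (iii) the identification of the kernel with the $S$- and $U$-equations, all of which carry the entire content of the theorem. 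In particular the rank computation you allude to (ruling out spurious relations in $\gr_\Theta\E$) is precisely what distinguishes a proof from a heuristic, and you have not supplied it. If you want to make this rigorous you would need to reproduce the dimension count that Pollack performs on the subalgebra generated by the $w\cdot[a,b]$; absent that, the argument is incomplete even as a reconstruction of the cited result.
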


If~$f$ is a modular form of modular weight~$k$, then the 
element~$\Rrel{f,d}\in\E_2$ is of depth~$d$ and weight~$n=k+2d-1$.  The
depth~$2$ and~$3$ elements corresponding~to the weight 12 cusp form
known as the Ramanujan~$\Delta$ are given~by
\begin{gather*}
 \Rrel{\Delta,2}= h_{2,8}^{2} - 3\,h_{4,6}^{2}\equiv 0\ [\Theta^3\E] \\
 \text{and}\qquad\Rrel{\Delta,3}=4\,h_{2,10}^{3} - 25\,h_{4,8}^3 + 21\,h_{6,6}^3\equiv 0\ [\Theta^3\E].
\end{gather*}

Theorem~\ref{theopollack} shows that, as also occurs in other situations 
(cf. \cite{Ihara02,Speriodpolys,LNT,HainMatsumoto,Brown}) related~to the theory 
of multiple zeta values, periods of modular forms appear
as coefficients of relations in~$\E/\Theta^3\E$. ~Pollack asked the
following question: {\it when do these relations in~$\E/\Theta^3$ lift~to 
actual relations in~$\E$?}  Explicit calculation shows that this is not the 
case in general for the relations associated~to the Eisenstein series for 
even ~$d$: for example, the relation~$h_{d,n+d-2}^{d+2}\equiv 0\
[\Theta^3\E]$ associated~to the series ~$E_{n+4}$ does not lift~to an identity
in ~$\E$. However,~Pollack observed on several examples that the relations 
coming from cusp forms do seem~to lift~to relations in~$\E$. For example, 
the relation~$\mathrm{R}_{\Delta,3}\equiv 0\
[\Theta^3\E]$ admits the lifting
\begin{equation}\label{eqex}
 4\,h_{2,10}^{3} - 25\,h_{4,8}^3 + 21\,h_{6,6}^3 = -
\tfrac{345}{8}[\eps_6,[\eps_6,\eps_4]]+\tfrac{231}{20}[\eps_4,[\eps_8,\eps_2]]\text.
\end{equation}

We can thus formulate an explicit conjecture framing an answer~to~Pollack's 
question as follows.
\begin{conj}\label{mainconj}
Let~$k$ be an integer, and let ~$f$ be a cusp 
form of weight~$k$ for~$\SL_2(\Z)$. Then there exists a linear combination
$T$ of brackets of~$\eps_{2i}$, containing {\rm at least three}~$\eps_{2i}$
with~$i>1$, such that~$\Rrel{f,d}=T$.
\end{conj}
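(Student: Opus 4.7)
Following the strategy outlined in the introduction, I would pass to Ecalle's mould formalism. Each derivation $\eps_{2i}$ corresponds to an explicit mould; the Lie bracket of derivations translates to the $\ari$-bracket; and a depth-$3$ derivation is captured by a rational function in three commutative variables $u_1, u_2, u_3$. The conjecture in depth~$3$ thus becomes an identity of rational functions, in which denominators---far from being obstacles---encode useful structural information that is hidden in the purely Lie-algebraic setting.

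The first step is to compute the depth-$3$ mould $M_f$ corresponding to $\Rrel{f,3}$. Unpacking
\[
  h_{p,q}^3 = \tfrac{1}{q}\bigl[\eps_{p+2},[\eps_0,\eps_{q+2}]\bigr] - \tfrac{1}{p}\bigl[[\eps_0,\eps_{p+2}],\eps_{q+2}\bigr]
\]
and summing against the periods $r_{p-1}(f)$ yields an explicit rational function whose structure is governed by the period polynomial of $f$. In parallel I would compute the moulds $M_{a,b,c}$ of the candidate summands $[\eps_{2a},[\eps_{2b},\eps_{2c}]]$ with $a,b,c \geq 2$, which form a distinguished subspace because neither $\eps_0$ nor $\eps_2$ appears. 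The conjecture then reduces to the assertion that $M_f$ lies in the span of the $M_{a,b,c}$.

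The crucial input is that $\Rrel{f,3}$ already lies in $\Theta^3\E$ by Theorem~\ref{theopollack}, which in mould language translates to a divisibility or vanishing property of $M_f$. Combined with the two general mould theorems to be proved in the appendices---which I expect to describe the image of the triple-bracket map in terms of push-invariance, swap-related symmetries, and residue constraints---this should reduce the conjecture to checking that $M_f$ satisfies exactly these symmetries. The key point is that the Eichler--Shimura cocycle relation satisfied by the period polynomial of the cusp form $f$ supplies precisely the identities among the $r_{p-1}(f)$ needed for this verification.

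The main obstacle I anticipate is the surjectivity half of this argument: producing an explicit combination $T$, not merely showing its existence abstractly. The hypothesis that $f$ is a cusp form (rather than an Eisenstein series) should be essential here, consistent with Pollack's observation that Eisenstein relations do not generally lift; correspondingly, the appendix theorems on moulds are likely tailored to distinguish the cuspidal case. Once the symmetry verification is in place, the coefficients $\lambda_{a,b,c}$ of the lift $T$ are determined by a finite linear system in each weight, whose solvability follows from the structural results on moulds, giving $\Rrel{f,3}=T$ as required.
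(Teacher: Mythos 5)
Your plan correctly identifies the first half of the paper's argument: pass to moulds via the map $\Psi$, use $\Rrel{f,3}\in\Theta^3\E$ to extract a divisibility condition, and exploit the fact that the denominators $\De_r$ carry structural information. In the paper, $D\in\Theta^3\E$ gives (Proposition~\ref{propP4}) that $D(a)=[a,Q]$, so $\ma(D(a))$ is divisible by $u_1+u_2+u_3$; push-invariance then gives divisibility by $u_1u_2u_3$, hence $\Psi(D)=\Da(D(a))$ is a genuine \emph{polynomial-valued} mould concentrated in depth $3$. That much you essentially have.

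The gap is in how you propose to close the argument. You suggest verifying that $M_f$ satisfies certain symmetries coming from the Eichler--Shimura relations and then solving a finite linear system whose solvability "follows from the structural results on moulds." But nothing in the appendices plays that role: Theorem~\ref{these} says $\ARI^{\sing}_{\underline{\al}/\underline{\al}}$ is closed under the $\ari$-bracket, and Theorem~\ref{B3} says $\Psi$ is a Lie morphism --- both are infrastructural, not descriptions of the image of the triple-bracket map. The actual engine that produces the combination $T$ is Goncharov's theorem (Theorem~\ref{Gonch} / Theorem~\ref{mouldGonch}): \emph{every} depth-$3$ polynomial bialternal mould is a linear combination of $\ari(U_{2r},\ari(U_{2s},U_{2t}))$ with $r,s,t\ge 1$. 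Once you know $\Psi(D)$ is polynomial-valued and bialternal (the latter automatic since $\mathcal U\subset\ARI^{\pol}_{\underline{\al}/\underline{\al}}$), Goncharov's theorem does all the surjectivity work at once, with no linear system to solve and no explicit period-polynomial computation. Without this input, your plan stalls precisely at the step you flag as "the main obstacle."

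Two further misconceptions worth flagging. First, you do not need the Eichler--Shimura cocycle relation explicitly: Pollack's Theorem~\ref{theopollack} already packages that information into the single hypothesis $D\in\Theta^3\E$, and the proof never unwinds it. Second, cuspidality is \emph{not} essential in depth $3$: the paper's main theorem applies to any $D\in\Theta^3\E$ of the prescribed form, Eisenstein contributions included (the failure of lifting for Eisenstein series is observed only for even $d$), so the distinction you anticipate the appendix theorems being "tailored" to make does not in fact arise at $d=3$.
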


The goal of this article is~to prove this conjecture in the case~$d=3$.  The
theorem can be stated directly in this situation as follows.  

\begin{theo} Fix a positive integer~$n$, and let~$D\in\E$ be a linear 
combination of terms of the form~$\bigl[\eps_{2i},
[\eps_0,\eps_{2j}]\bigr]$ 
with~$i,j\ge 2$. Assume that~$D\in\Theta^3\E$.  Then~$D$ can be
written as a linear combination of terms of the form
$\bigl[\eps_{2r},[\eps_{2s},\eps_{2t}]\bigr]$ with~$r,s,t\ge 2$.
\end{theo}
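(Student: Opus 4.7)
The plan is to pass from the non-commutative derivation algebra $\E \subset \Der\Lie[a,b]$ to Ecalle's mould-theoretic framework introduced in Section~\ref{sec3}. Under the dictionary, each derivation $\eps_{2i}$ with $i \ge 0$ is encoded as a (bi)mould --- a family of rational functions in commutative variables indexed by depth --- and the bracket of derivations corresponds to the $\ari$-bracket of moulds. Since depth adds under brackets, every depth-$3$ element of $\E$ becomes a rational function in three variables $u_1, u_2, u_3$. The key advantage, emphasized in the abstract, is that these functions have genuine poles whose orders and locations carry structural information invisible on the Lie-algebraic side.

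First, I would compute explicitly the depth-$3$ mould associated to a general summand $[\eps_{2i}, [\eps_0, \eps_{2j}]]$ of $D$ by applying the $\ari$-formula. Since $\eps_0$ contributes a characteristic polar factor supported on the coordinate hyperplanes $u_k = 0$, this polar part propagates into the depth-$3$ mould of $D$. Next, I would identify the mould-theoretic signature of $\Theta^3 \E$: the filtration $\Theta$ is induced by the descending central series of the Lie subalgebra of $\Lie[a,b]$ generated by the $w\cdot[a,b]$, and I expect that in depth~$3$ this corresponds exactly to the vanishing of the $u_k = 0$ polar parts, i.e. to polynomiality of the mould. The hypothesis $D \in \Theta^3 \E$ then forces the polar contributions from the $\eps_0$-factors in $D$ to cancel globally, leaving a polynomial depth-$3$ mould in the image of nested $\ari$-brackets. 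The final step is to show that every such polynomial mould already lies in the span of moulds corresponding to $[\eps_{2r}, [\eps_{2s}, \eps_{2t}]]$ with $r, s, t \ge 2$, a matching that should follow from the two general theorems on moulds proved in the appendices.

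The main obstacle is the pair of identifications just described. First, pinning down precisely how $\Theta^3 \E$ manifests itself in mould terms --- and not merely up to a higher filtration piece --- is delicate, because the filtration is defined indirectly through the descending central series inside $\Lie[a,b]$. Second, the denominators produced by iterated $\ari$-brackets are combinatorially rich, and controlling them carefully enough to recognize the residual polynomial mould as a combination of triple $\eps_{2i}$-brackets (with all indices $\ge 2$, in particular with no $\eps_0$-ingredient) requires careful bookkeeping. Once those two identifications are in place, the theorem should drop out cleanly from the mould-theoretic machinery, exactly as promised by the abstract's claim that ``the proof follows quite naturally from Ecalle's theory of moulds''.
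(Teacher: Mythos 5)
Your high-level strategy matches the paper's: translate to moulds via the map $\Psi(D)=\Da(D(a))$, show that the hypothesis $D\in\Theta^3\E$ forces $\Psi(D)$ to be a polynomial-valued mould, and then pull the answer back through the isomorphism $\E\simeq\mathcal U$ of Corollary~\ref{Psi}. However, there are two concrete gaps, the second of which is fatal as stated.

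\emph{Gap 1: the passage from $\Theta^3\E$ to polynomiality.} You flag this yourself as ``delicate'' and propose to track pole propagation from the $\eps_0$-factors directly. The paper does something cleaner and entirely on the polynomial side: by Proposition~\ref{propP4}, $D\in\Theta^3\E$ forces $D(a)$ to contain no monomial ending in $b^2$; since $D(a)$ is already $\push$-invariant (Corollary~\ref{pushinv}, valid for all of $\E$), it therefore contains no monomial starting and ending in $b$, and Proposition~\ref{LKV}~\eqref{prop:LKV:ii} then yields $D(a)=[a,Q]$. Passing to moulds via Theorem~\ref{maprops}~\eqref{th:maprops:3} gives divisibility of $\ma(D(a))$ by $(u_1+u_2+u_3)$, and push-invariance upgrades this to divisibility by $\De_3$, so $\Psi(D)$ is polynomial. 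Without some version of this reduction (or a direct computation substituting for it), your plan does not actually establish the polynomiality you need; it only predicts it.

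\emph{Gap 2: the closing step.} You write that the residual polynomial mould should be recognized as a combination of triple $\eps_{2i}$-brackets with $i\ge 2$, ``a matching that should follow from the two general theorems on moulds proved in the appendices.'' This is not so. The appendices prove Theorem~\ref{these} (that $\ARI^{\sing}_{\underline{\al}/\underline{\al}}$ is closed under the $\ari$-bracket) and Theorem~\ref{B3} (that $\Psi$ is a Lie algebra morphism). Neither gives any information about a spanning set for depth-$3$ polynomial bialternal moulds. The indispensable input here is Goncharov's theorem (Theorem~\ref{Gonch}, stated at the end of Section~\ref{sec2} and translated to moulds as Theorem~\ref{mouldGonch}), which says that depth-$3$ elements of the linearized double shuffle Lie algebra are spanned by $\bigl\{\ad(a)^{2r}(b),\{\ad(a)^{2s}(b),\ad(a)^{2t}(b)\}\bigr\}$ with $r,s,t\ge 2$, equivalently that depth-$3$ moulds in $\ARI^{\pol}_{\underline{\al}/\underline{\al}}$ are spanned by $\ari\bigl(U_{2r},\ari(U_{2s},U_{2t})\bigr)$ with $r,s,t\ge 1$. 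This is precisely what produces the constraint that all indices are $\ge 2$ (i.e.\ strictly greater than $0$, so that no $\eps_0$ appears), and it has no substitute in the mould machinery you invoke. Your proposal cannot close without it.

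A minor point: you describe the mould-theoretic signature of $\Theta^3\E$ as ``exactly'' polynomiality of the mould. The paper only establishes, and only needs, the implication $\Theta^3\E\Rightarrow$ polynomiality; the converse is neither proved nor required.
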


We observe in passing that~$\eps_2$ never appears in any brackets
of~$\eps_{2i}$ because, as is easily seen~by hand, it is central in~$\E$.
Our proof is based on three ingredients: first of all, a reformulation in 
terms of polynomial algebra of the condition that a 
derivation in~$\E$ belongs~to~$\Theta^3\E$ (section~\ref{sec2}), secondly, a theorem 
due~to Goncharov characterizing certain types of~Lie elements in
depth~$3$ (end of section~\ref{sec2}), and thirdly, the passage~to 
Ecalle's language of moulds (section~\ref{sec3}) which allows us~to make interesting
use of rational functions and denominators. Two particularly useful theorems
from section~\ref{sec3}, one in mould theory and the other concerning the translation 
of the~Lie algebra~$\E$ into mould theory, are proved in appendices, and 
the proof of the main theorem is given in section~\ref{sec4}.

\section{A reformulation of~Pollack's property}\label{sec2}

\subsection{Properties of~Lie polynomials}
The following definition gives a key useful property of elements of~$\E$.

\begin{defi}
The endomorphism~$\push$ of the vector space~$\Q\chev{a,b}$ is defined~by 
its value on monomials, given~by
\[
 \push(a^{i_0}b\cdots a^{i_{r-1}}b\ a^{i_r}) = a^{i_r}ba^{i_0}b\cdots ba^{i_{r-1}}\text{.}
\]
A polynomial~$f$ is said~to be ~\emph{$\push$-invariant} if~$\push(f) = f$.
\end{defi}

It is known that~$\push$-invariant~Lie polynomials are exactly the values 
at~$a$ of derivations that are zero on~$[a,b]$; we recall this and another
useful characterization in the following proposition.

\begin{prop}[{\cite[Thm.~2.1]{LeilaKV}}]\label{LKV} Let~$P\in\Lie[a,b]$, and
assume that $P$ is of homogeneous degree $\ge 2$. \label{propL2i}
\begin{enumerate}[i)]\item\label{prop:LKV:i} The polynomial~$P$ is~$\push$-invariant if and only if
there exists an element~$Q$ in~$\Lie[a,b]$ such that~$[P,b]+[a,Q] = 0$, in
other words the derivation~$D$ defined~by~$D(a)=P$,~$D(b)=Q$ satisfies
$D([a,b])=0$. If such a~$Q$ exists then it is unique.
 \item\label{prop:LKV:ii} There exists a polynomial~$Q\in\Lie[a,b]$ such that~$P=[a,Q]$ if and only if~$P$ does not contain any monomials starting and ending in~$b$.
\end{enumerate}
\end{prop}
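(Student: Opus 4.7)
The plan is to prove part~(ii) first and use it to reduce part~(i) to an equivalence between push-invariance and a simpler shift relation on the coefficients of~$P$. For (ii), the forward direction is immediate from $[a,Q]=aQ-Qa$: every monomial of $[a,Q]$ starts with $a$ (from $aQ$) or ends with $a$ (from $-Qa$), so none can both start and end with~$b$. For the converse, working in the associative algebra $\Q\chev{a,b}$, I can solve $aQ-Qa=P$ for $Q$ coefficient by coefficient: monomials $aw'b$ of $P$ fix $c_{w'b}(Q)$, monomials $bw'a$ fix $c_{bw'}(Q)$, and monomials $aw'a$ give an overlap condition solvable inductively on the leading $a$-block (with the normalization $Q|_{\Q[a]}=0$). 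That the resulting $Q$ lies in $\Lie[a,b]$ I would establish by a dimension count: the map $\ad(a)\colon\Lie[a,b]_n\to\Lie[a,b]_{n+1}$ is injective in degree $n\ge 2$ (its kernel in $\Lie[a,b]$ is only $\Q\cdot a$) with image contained in the subspace of $\Lie[a,b]_{n+1}$ having no $bub$ monomials, and equality of dimensions can be verified via the Witt formula applied to the ideal $(b)\subseteq\Lie[a,b]$, which is itself free on $\{\ad(a)^k(b)\}_{k\ge 0}$. Uniqueness of $Q$ in (ii) follows from the same injectivity.

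For (i), since $D([a,b])=[P,b]+[a,Q]$, the desired equation becomes $[a,-Q]=[P,b]$. Applying~(ii) to the Lie polynomial $[P,b]$ shows that $Q\in\Lie[a,b]$ exists if and only if $[P,b]$ contains no monomial of the form~$bub$. A direct expansion $[P,b]=Pb-bP$ identifies the coefficient of such a monomial as $c_{bu}(P)-c_{ub}(P)$, so the condition becomes the shift relation
\[
c_{bu}(P)=c_{ub}(P)\qquad\text{for every word }u\in\{a,b\}^*,
\]
and the uniqueness of $Q$ in~(i) again follows from injectivity of $\ad(a)$ in degrees $\ge 2$. It remains to show that, for $P\in\Lie[a,b]$ of degree $\ge 2$, this shift relation is equivalent to push-invariance $\push(P)=P$.

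This last equivalence is the main obstacle. The two symmetries act quite differently on raw words: the shift moves an initial~$b$ to the end, whereas push cyclically permutes the $a$-block-length tuple $(i_0,\ldots,i_r)$. The bridge is the cyclic anti-invariance satisfied by every homogeneous Lie polynomial of degree $\ge 2$---a standard consequence of $\Lie[a,b]_{\ge 2}$ lying in the image of $1-\sigma$ on $\Q\chev{a,b}$, where $\sigma$ is the letter-shift. Given two words $w$ and $\push(w)$ in a single push orbit, the strategy is to travel between them via successive letter-shifts, applying the shift relation at each step where the current word starts with $b$ and cyclic anti-invariance on the intermediate cyclic orbits traversed; the net effect telescopes to the desired equality $c_w(P)=c_{\push(w)}(P)$, with the converse running the argument backwards. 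The main difficulty is the combinatorial bookkeeping, since a push orbit generally meets several cyclic orbits and one must piece together multiple anti-invariance identities. A cleaner alternative---more in keeping with the spirit of this paper---is to translate the problem into Ecalle's mould calculus (introduced in Section~\ref{sec3}), in which push-invariance and the shift relation both become direct symmetries of the associated mould, and their equivalence on Lie-type moulds is transparent.
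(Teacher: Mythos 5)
This proposition is not proved in the paper: it is imported wholesale from~\cite{LeilaKV}, so there is no internal argument to compare yours with, and I can only judge the proposal on its own merits. Its skeleton is reasonable and the easy halves are correct: the forward implication of~(ii), the reduction of~(i) to solving $[P,b]=[a,-Q]$ and hence, via~(ii), to the coefficient identity $(P\,|\,bu)=(P\,|\,ub)$ for all words~$u$ (your computation $([P,b]\,|\,bub)=(P\,|\,bu)-(P\,|\,ub)$ is right), the implication from push-invariance to that identity (since $bu=\push(ub)$), and the uniqueness claims via injectivity of $\ad(a)$ in degree~$\ge 2$. However, both hard implications are left open, and the tools you propose for them do not suffice as described.

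For the converse of~(ii): solving $aQ-Qa=P$ coefficient by coefficient in $\Q\chev{a,b}$ meets obstructions beyond the absence of words beginning and ending in~$b$ --- for instance $aba$ contains no such word, yet $aba\neq aQ-Qa$ for any~$Q$ --- so the consistency of your ``overlap conditions'' must come from the hypothesis that $P$ is a Lie element, and you never identify how it enters. Your fallback dimension count requires $\dim V_{n+1}=\dim\Lie[a,b]_n$, where $V_{n+1}\subset\Lie[a,b]_{n+1}$ is cut out by the $2^{n-1}$ conditions $(P\,|\,bub)=0$; the Witt formula gives $\dim\Lie[a,b]_n$ and the graded dimensions of $\Lie[b,[a,b],[a,[a,b]],\ldots]$, but says nothing about the rank of those highly dependent conditions, which is precisely the quantity at stake. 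For the remaining implication of~(i), that the shift relation forces push-invariance, the two inputs you name provably cannot close the gap: both the cyclic-sum relations and the shift relations $(P\,|\,bu)=(P\,|\,ub)$ are each supported on a single cyclic class of words (as $ub$ is a rotation of $bu$), whereas push-invariance equates coefficients of words lying in \emph{different} cyclic classes --- e.g.\ $a^2ba^3ba$, $aba^2ba^3$ and $a^3baba^2$ form one push-orbit spread over three cyclic classes, and a functional such as $(P\,|\,a^2ba^3ba)-(P\,|\,aba^2ba^3)$ cannot lie in the span of functionals each supported on one class unless each piece vanishes separately, which is false. Note also that cyclic-sum vanishing holds for every element of $[\Q\chev{a,b},\Q\chev{a,b}]$, a space far larger than $\Lie[a,b]$ on which the proposition fails, so more of the Lie (shuffle/primitivity) structure must be used; that is where the actual proof in~\cite{LeilaKV} does its work. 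Finally, appealing to the mould dictionary of Theorem~\ref{maprops} would be circular here, since the proofs of those translation statements in the literature rest on facts of exactly this kind.
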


\begin{cor}\label{pushinv} Let~$D\in\E$.  Then~$D(a)$ is a~$\push$-invariant
polynomial.
\end{cor}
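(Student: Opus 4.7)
The plan is to reduce the statement to a direct application of Proposition~\ref{LKV}\emph{(i)}, via the fact that every $D \in \E$ annihilates~$[a,b]$.

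First I would verify that the subspace
\[
  \mathcal{K} = \{D \in \Der\Lie[a,b] \mid D([a,b]) = 0\}
\]
is a Lie subalgebra of~$\Der\Lie[a,b]$: if $D_1, D_2 \in \mathcal{K}$, then
\[
  [D_1,D_2]([a,b]) = D_1(D_2([a,b])) - D_2(D_1([a,b])) = 0,
\]
so $[D_1,D_2] \in \mathcal{K}$. The generators $\eps_{2i}$ of~$\E$ all lie in~$\mathcal{K}$ (this is recorded immediately after their definition in the excerpt), hence $\E \subseteq \mathcal{K}$, and every $D \in \E$ satisfies $D([a,b]) = 0$.

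Second, I would apply Proposition~\ref{LKV}\emph{(i)} to conclude. Decompose $D$ into its weight-homogeneous components $D = \sum_n D_n$; each $D_n$ lies in~$\E$ and still annihilates~$[a,b]$. For weight $n \ge 2$, setting $P = D_n(a)$ and $Q = D_n(b)$, the identity $[P,b] + [a,Q] = D_n([a,b]) = 0$ is exactly the hypothesis of Proposition~\ref{LKV}\emph{(i)}, so~$P$ is~$\push$-invariant. For weight $n = 1$, the only contribution comes from $\eps_0$, and $\eps_0(a) = b$ is~$\push$-invariant by inspection of the definition of~$\push$. Summing gives that $D(a)$ is~$\push$-invariant.

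There is essentially no obstacle here beyond the trivial closure check and the low-weight bookkeeping; the real content of the corollary is entirely carried by Proposition~\ref{LKV}, and the work has already been done in showing that $\eps_{2i}([a,b]) = 0$ for each generator.
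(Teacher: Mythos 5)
Your proof is correct and follows essentially the same route as the paper: $D([a,b])=0$ because the generators $\eps_{2i}$ kill $[a,b]$ and this property passes to brackets, so Proposition~\ref{LKV}\emph{(i)} applies to $D(a)$. The paper's own proof is a one-liner that leaves implicit both the Lie-subalgebra closure check and the homogeneous-degree-$\ge 2$ hypothesis of Proposition~\ref{LKV}; you spell these out, which is correct and slightly more careful.
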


Indeed, since~$D\in\E$, we have~$D([a,b])=[a,D(b)]+[D(a),b]=0$, so
$D(a)$ is push-invariant~by Proposition~\ref{LKV}~\eqref{prop:LKV:i}.

We can now give the reformulation of~Pollack's property that will serve 
the purposes of our proof.

\begin{prop}\label{propP4}
Let~$D$ be a linear combination 
of terms of the form
$\bigl[\eps_{2i},[\eps_0,\eps_{2j}]\bigr]$ with~$i,j>1$.
If the derivation~$D$ lies in~$\Theta^3\E$,
there exists a~Lie polynomial~$Q\in\Lie[a,b]$ 
such that~$D(a)=[a,Q]$.
\end{prop}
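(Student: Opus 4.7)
My plan is to use Proposition~\ref{LKV}(ii), which reduces the claim to showing that $D(a)$ contains no monomial starting and ending in $b$; equivalently, that $D(a)\in[a,\Lie[a,b]]$.

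The key observation, proved by a short Jacobi computation, is that the property ``$D(a)\in[a,\Lie[a,b]]$'' is stable under Lie brackets of derivations: if $D_1, D_2 \in \Der\Lie[a,b]$ satisfy $D_\ell(a) = [a, Q_\ell]$ for $\ell=1,2$, then
\[
  [D_1, D_2](a) = [a,\ [Q_1, Q_2] + D_1(Q_2) - D_2(Q_1)].
\]
Since $\eps_{2i}(a) = [a, a^{2i-1}\cdot b]$ whenever $i \ge 1$, iterating this identity shows that every iterated Lie bracket of the derivations $\eps_{2i}$ with $i\ge 1$ sends $a$ into $[a, \Lie[a,b]]$. The obstruction to applying this directly to our $D$ is that $D$ is built from brackets involving $\eps_0$, for which $\eps_0(a) = b \notin [a, \Lie[a,b]]$; so the induction does not apply term by term, and the $\eps_0$-contributions from the different summands $[\eps_{2i}, [\eps_0, \eps_{2j}]]$ must instead cancel.

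The role of the hypothesis $D\in\Theta^3\E$ is precisely to enforce this cancellation. Unpacking the definition, it translates to the condition that $D(a)$ lies in the third term of the descending central series of the subalgebra of $\Lie[a,b]$ generated by the $w\cdot[a,b]$. To exploit this concretely, I would compute the ``$b\cdots b$-component'' of each term $[\eps_{2i}, [\eps_0, \eps_{2j}]](a)$ --- noting for instance that $[\eps_0, \eps_{2j}](a) \equiv -2(j-1)\,b\,a^{2j-1}\,b \pmod{[a,\Lie[a,b]]}$ --- and then show that the overall $b\cdots b$-part of $D(a)$ vanishes when the coefficients $\lambda_{i,j}$ satisfy the filtration constraint. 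The main obstacle I anticipate is exactly this last step: turning the abstract condition ``$D(a)$ lies in the third commutator ideal of the subalgebra'' into a concrete linear relation on the $\lambda_{i,j}$ from which the vanishing of the residual $b\cdots b$-part can be read off.
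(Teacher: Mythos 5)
Your opening step is the same as the paper's: reduce via Proposition~\ref{LKV}\,\eqref{prop:LKV:ii} to showing that $D(a)$ contains no monomial both starting and ending with $b$. But you then stall at exactly the point you flag yourself: you never actually establish that the ``$b\cdots b$-part'' vanishes, and the route you sketch for it (writing out $D = \sum\lambda_{i,j}\,[\eps_{2i},[\eps_0,\eps_{2j}]]$, computing residual $b\cdots b$-components term by term, and hoping to extract a usable linear constraint on the $\lambda_{i,j}$ from $D\in\Theta^3\E$) is not carried out and is considerably harder than it needs to be. This is a genuine gap, not just a missing detail.

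What you are missing are two observations that close the argument almost immediately. First, the hypothesis $D\in\Theta^3\E$ should be used on the \emph{expression for $D(a)$}, not on the coefficients: by the definition of $\Theta$, a depth-$3$ element of $\Theta^3\E$ has $D(a)$ equal to a linear combination of triple brackets $[a^i\cdot b,[a^j\cdot b,a^k\cdot b]]$ with $i,j,k\ge 1$. Expanding such a bracket, the only monomials ending in $b$ come from the rightmost factor's leading monomial $a^{\ell}b$ with $\ell\in\{i,j,k\}\ge 1$, so no monomial of $D(a)$ ends in $b^2$. Second, you should invoke Corollary~\ref{pushinv}: $D(a)$ is $\push$-invariant because $D$ kills $[a,b]$. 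Since $\push^{-1}$ sends any word beginning and ending in $b$ to a word ending in $b^2$, $\push$-invariance together with ``no monomial ends in $b^2$'' forces ``no monomial starts and ends with $b$.'' That is the whole proof; no term-by-term cancellation among the $\eps_0$-contributions and no explicit $\lambda_{i,j}$ bookkeeping are needed. Your Jacobi-identity observation about bracket stability, while correct, is a dead end here precisely for the reason you notice ($\eps_0(a)=b$ breaks it), and should simply be discarded.
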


\begin{proof}
By the definition of the filtration~$\Theta$, the derivation~$D$ lies
in ~$\Theta^3\E$ if and only if~$D(a)$ can be written as a linear 
combination of terms of the form~$[a^i\cdot b,[a^j\cdot b,a^k\cdot b]]$ with
~$i$,~$j$ and~$k$ strictly positive. Thus we can assume
\[D(a)=(a^i\cdot b)(a^j\cdot b)(a^k\cdot b)-(a^i\cdot b)(a^k\cdot b)(a^j\cdot b)
-(a^j\cdot b)(a^k\cdot b)(a^i\cdot b)+(a^k\cdot b)(a^j\cdot b)(a^i\cdot b).\]
Then the monomials ending with~$b$ arise only from the terms
\[(a^i\cdot b)(a^j\cdot b)a^kb-(a^i\cdot b)(a^k\cdot b)a^jb
-(a^j\cdot b)(a^k\cdot b)a^ib+(a^k\cdot b)(a^j\cdot b)a^ib.\]
In particular this shows that no monomial in~$D(a)$ can end in~$b^2$.
Thus since~$D(a)$ is a~$\push$-invariant polynomial~by Corollary~\ref{pushinv},
no monomial of~$D(a)$ can start and end with~$b$. Then~by Proposition 
\ref{LKV}~\eqref{prop:LKV:ii}, it follows that~$D(a)$ is of the form~$[a,Q]$.
\end{proof}

\subsection{A theorem of Goncharov}
We end this section~by giving a result of Goncharov which will be essential
to the proof of our main theorem.

\begin{defi}Set~$b_i=a^{i-1}b$ for~$i\ge 1$.  For every polynomial 
$P\in\Lie[a,b]$, let~$P_*$ be the polynomial obtained from~$P$~by
forgetting all the words ending in~$a$ and adding on the term
$\sum_{i\ge 1} \frac{(-1)^{i-1}}{i}(P|a^{i-1}b)\,b^i$, where~$(P|w)$ denotes the coefficient of
the word~$w$ in the expanded polynomial~$P$, where a Lie polynomial written
as a sum of Lie brackets is expanded as an ordinary polynomial in $a,b$ via
the rule $[a,b]=ab-ba$, with $Lie[a,b]$ considered as a subspace of its
universal enveloping algebra $\Q\langle a,b\rangle$.  The polynomial~$P$ is said~to satisfy 
the {\it linearized double shuffle relations} if, rewriting~$P_*$ as
a polynomial in the~$b_i$, it lies in~$\Lie[b_1,b_2,\ldots]$.
Following the notation of \cite{Brown}, we write~$\ls$ for the space of 
polynomials satisfying the linearized double 
shuffle relations. The space~$\ls$ is weight and depth graded, with
weight being the degree of the polynomials and depth the number of
$b$'s in each monomial.  We write~$\ls_n^d$ for the part of weight~$n$ and 
depth~$d$. These spaces are also studied in \cite{IKZ}, which uses the 
notation~$DSh_d(n-d)$ for~$\ls_n^d$.

To every~$P\in\Lie[a,b]$ we associate a derivation~$D_P$ of 
$\Lie[a,b]$~by setting~$D_P(a)=0$ and~$D_P(b)=[b,P]$. The~Lie bracket
on derivations defines another~Lie bracket
on the underlying vector space of~$\Lie[a,b]$, 
the {\it Poisson bracket}, denoted~$\{\cdot,\cdot\}$ and given explicitly~by
\[\{P,Q\}=[P,Q]+D_P(Q)-D_Q(P),\]
which is nothing other than an expression for the usual bracket of
derivations: namely, the identity~$[D_P,D_Q]=D_{\{P,Q\}}$ holds for every polynomials~$P$ and~$Q$.
\end{defi}

The following theorem was proven in \cite[\S~7]{Goncha} (but see also
\cite[7.3]{Brown} for a clearer explanation). The point that
will be essential in the proof of our main theorem is that the powers
of~$\ad(a)$ that appear in the statement are all~$\ge 2$.

\begin{theo}\label{Gonch} Let~$P\in\Lie[a,b]$ be a 
polynomial of homogeneous depth~$3$ satisfying the linearized double shuffle 
relations.  Then~$P$ is a linear combination of terms of the form
\[\bigl\{\ad(a)^{2r}(b),\{\ad(a)^{2s}(b),\ad(a)^{2t}(b)\}\bigr\}\]
with~$r,s,t\ge 2$.
\end{theo}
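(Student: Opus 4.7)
The plan is to pass from non\nobreakdash-commutative Lie polynomials to a commutative polynomial picture in three variables, and then to reduce the theorem to a finite\nobreakdash-dimensional spanning statement.

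To any depth\nobreakdash-$3$ element $P\in\Lie[a,b]$ I would associate a polynomial $\tilde P(x_1,x_2,x_3)$ in three commuting variables, obtained by reading off the exponents of $a$ between the three $b$'s in each monomial of $P$ and then antisymmetrizing (using the Jacobi identity) to obtain a canonical representative. Under this dictionary, the Lie bracket translates into an explicit polynomial operation, and the auxiliary term $D_P(Q)-D_Q(P)$ in the Poisson bracket translates into a variable\nobreakdash-shift operation, because $D_P(b)=[b,P]$ prepends a copy of $P$'s $x$\nobreakdash-variables. In particular, I would obtain a closed formula for the commutative image of $\bigl\{\ad(a)^{2r}(b),\{\ad(a)^{2s}(b),\ad(a)^{2t}(b)\}\bigr\}$ as an antisymmetrization of the monomial $x_1^{2r}x_2^{2s}x_3^{2t}$, corrected by boundary shift terms.

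Next, I would translate the linearized double shuffle conditions into symmetry/vanishing conditions on $\tilde P$: the shuffle side (from $P\in\Lie[a,b]$) imposes one set of sign\nobreakdash-symmetries, while the stuffle side (from $P_*\in\Lie[b_1,b_2,\ldots]$) imposes another, most conveniently expressed after the change of variables $y_i=x_1+\cdots+x_i$. In depth $3$ these two invariances combine into a concrete system cutting out $\ls_n^3$ as a specific subspace of polynomials in $x_1,x_2,x_3$. The ``even exponent'' half of the conclusion (the factor $2$ in $2r,2s,2t$) then falls out of the combined antisymmetry automatically.

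The final step is the spanning claim: the images of the triple Poisson brackets with $r,s,t\ge 2$ should span $\ls_n^3$. I expect the main obstacle to be precisely the lower bound $r,s,t\ge 2$, which is the ``essential'' point singled out in the statement. Naive degree counting would only give spanning by triples with some exponent possibly equal to $0$ or $1$, and one must show that those boundary generators are redundant, i.e.\ expressible as combinations of the interior ones. I expect this reduction to follow from the stuffle symmetry condition, which controls the leading behaviour of $\tilde P$ as one of the $x_i$'s tends to $0$ (or to $x_{i-1}$) and so forces the ``low exponent'' generators to collapse onto the subspace spanned by those with all exponents $\ge 2$. This boundary\nobreakdash-reduction is the technical heart of the argument; the rest is bookkeeping in a fixed weight.
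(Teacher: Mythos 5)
The paper does not prove this theorem: it is attributed to Goncharov~\cite[\S~7]{Goncha} (with \cite[7.3]{Brown} cited for a cleaner exposition), and is used as a black box. So your blind attempt can only be assessed against the known proofs in the literature, not against a proof in this paper.

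Your overall framework is the right one: passing to a polynomial $\tilde P$ in three commuting variables is essentially the map $\ma$ of Section~\ref{sec3} (Goncharov's dihedral picture), the ``shift'' operation you describe for $D_P(Q)$ is exactly Ecalle's $\arit$ operator, and the translation of the linearized double shuffle conditions into $u$- and $v$-side antisymmetries is indeed bialternality. So steps (1)--(2) of your plan are sound, and your observation that the even-exponent constraint falls out automatically from the combined symmetries is also correct.

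The genuine gap is the step you yourself flag as the ``technical heart.'' You need to show that the bialternal depth-$3$ polynomial moulds are spanned by triple $\ari$-brackets of depth-$1$ monomials $U_{2r}$, and moreover only with $r,s,t\ge 2$. You propose that this ``boundary reduction'' should ``follow from the stuffle symmetry condition, which controls the leading behaviour of $\tilde P$ as one of the $x_i$'s tends to $0$.'' This is not an argument: nothing in the alternality or swap-alternality relations obviously implies that a mould vanishing suitably at $u_i=0$ collapses onto a span excluding the low-weight generators, and you give no mechanism for it. The actual content of Goncharov's theorem is a nontrivial surjectivity statement: it requires identifying the depth-$3$ bialternal space explicitly (via the dihedral Lie algebra and the action of $\mathrm{GL}_2(\mathbb Z)$), computing its dimension in each weight against period-polynomial data, and then matching those dimensions with the image of the triple Ihara brackets. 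None of that appears in your sketch. In particular, even the weaker spanning statement---that depth-$3$ elements of $\ls$ are all Lie-generated by depth-$1$ elements at all---is not automatic (the Lie algebra $\ls$ is not a priori generated in depth $1$), and is itself a theorem requiring proof. Until that surjectivity is supplied, the proposal is a plausible plan rather than a proof.
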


\section{Moulds}\label{sec3}

In this section we collect some of the basic results and definitions
of Ecalle's theory of moulds, which is the framework in which we will prove the main results. For a grand overview of Ecalle's theory containing the requisite
statements and definitions, see \cite{EcalleFlex}, and for a more detailed
introduction~to the part of mould theory specifically concerning multiple
zeta values and complete proofs of the basic results, see \cite{Sonline}.

\subsection{Definition}

A {\it mould} is a collection~$(A_r)_{r\ge 0}$ where each~$A_r$ is a function
of~$r$ commutative variables~$u_i$ over a given field, which here we
will take to be $\Q$. In particular, when $r=0$, $A_0(\emptyset)$ is a constant
in $\Q$.  The notation being redundant,
we will often write~$A(u_1,\ldots,u_r)$ rather than~$A_r(u_1,\ldots,u_r)$. We
will also consider moulds in variables~$v_i$. 

\medskip
Ecalle defines the following operations and properties:\nopagebreak
\begin{itemize}
\item the ari-bracket, a~Lie bracket on the space 
of moulds in the~$u_i$ (resp. in the~$v_i$) satisfying~$A(\varnothing)=0$.  
Under this bracket, said spaces become~Lie algebras denoted 
$\ARI$ (resp.~$\swapbar{\ARI}$)\footnote{%
Ecalle uses~$\ARI$ for {\it bimoulds},
which are functions of two sets of variables~$u_i$ and~$v_i$; in the framework
of this article we never use actual bimoulds (i.e. functions that are 
non-trivial in both families), but there are situations in which they are
very useful.%
}.  We write~$\ARI^{\pol}$ (resp.~$\swapbar{\ARI}^{\pol}$) 
for the spaces of polynomial-valued moulds; it follows directly from the 
definition of the~$\ari$-brackets on~$\ARI$ (resp.~$\swapbar{\ARI}$) that these 
subspaces are actually~Lie subalgebras. We recall the explicit expression for 
the~$\ari$-bracket in~Appendix~\ref{app:A} (see \cite{EcalleFlex},~\cite{RacinetArticle} or~\cite{Sonline});

\item the~$\swap$, an involutive variable-change map from~$\ARI$~to~$\swapbar{\ARI}$,
defined~by
\[\swap(A)(v_1,\ldots,v_r)=A(v_r,v_{r-1}-v_r,\ldots,v_1-v_2).\]
From this expression it is clear that~$A$ is rational or polynomial-valued,
then so is~$\swap(A)$;

\item the~$\push$, a cyclic variable-change operator acting on moulds in~$\ARI$~by
\[\push(A)(u_1,\ldots,u_r)=A(-u_1-\cdots-u_r,u_1,\ldots,u_{r-1}).\]
We write~$\ARI_{\push}$ for the space of moulds that are invariant
under the~$\push$ operator;

\item the notion of alternality, a mould~$A$ in~$\ARI$ (resp. in~$\swapbar{\ARI}$) being said~to be
{\it alternal} if for each~$r>1$,~$A$ satisfies
\[
 \sum_{\substack{\sigma\in\mathfrak{S}_r \\ \sigma(1)<\cdots < \sigma(s) \\
\sigma(s+1)< \cdots < \sigma(r)\hphantom{{}+1}}}A(u_{\sigma^{-1}(1)},
\ldots,u_{\sigma^{-1}(r)}) =
0,
\]
(resp. the same condition with~$u_i$ replaced~by~$v_i$). Finally, we say that a 
mould is \emph{bialternal} if both~$A$ and~$\swap(A)$ are alternal. 
We write~$\ARI_{\al/\al}$ for the space of bialternal moulds.
\end{itemize}

\medskip
As can be seen in several recent articles~\cite{EcalleFlex,IKZ,Brown}, the
passage~to commutative variables can be very useful in studying
algebras that are described in terms of non-commutative variables.
By Lazard elimination, the Lie algebra ${\rm Lie}[a,b]$ can be written
as a direct sum
$${\rm Lie}[a,b]=\Q a\oplus {\rm Lie}[C_1,C_2,\ldots],$$
where~$C_i=\ad(a)^{i-1}(b)$ and the right-hand Lie algebra is the free
Lie algebra on the $C_i$.
Let $\Q\langle C_1,C_2,\ldots\rangle$ denote
the free non-commutative polynomial ring on the $C_i$,
and for each $r\ge 1$, let
$\Q_r\langle C_1,C_2,\ldots\rangle$ denote the subspace of 
$\Q\langle C_1,C_2,\ldots\rangle$ spanned by the depth 
$r$ monomials $C_{a_1}\cdots C_{a_r}$.
Define a linear map from $\Q_r\langle C_1,C_2,\ldots\rangle$ 
to $\Q[u_1,u_2,\ldots]$, where the~$u_i$ are commutative variables,~by
\[\ma:C_{i_1}\cdots C_{i_r} \mapsto (-1)^{i_1+\cdots+i_r-r}u_1^{i_1-1}\cdots u_r^{i_r-1}.\]
If~$P\in \Lie[C_1,C_2,\ldots]$, we write
$P=\sum_{r\ge 1} P^r$ where each~$P^r$
denotes the part of~$P$ that is homogeneous of depth~$r$; then~$\ma$
extends to a map~$\ma:\Lie[C_1,C_2,\ldots]\rightarrow \ARI$~by
taking~$\ma(P)$~to be the mould whose depth~$r$ part is given~by~$\ma(P^r)$.

\medskip
Under the map~$\ma$, properties of a~Lie polynomial translate into properties
of the associated mould.  We assemble the most useful ones
in the following theorem.
\vfill\eject
\begin{theo}\label{maprops}~
\begin{enumerate}[i)]
\item\label{th:maprops:1} The map~$\ma$ transports the Poisson bracket onto the 
$\ari$-bracket, that is~$\ma(\{f,g\})=\ari(\ma_f,\ma_g)$,
and restricts~to a~Lie algebra isomorphism from~$\Lie[a,b]$~to the subalgebra~$\ARI^{\pol}_{\al}$ of alternal 
polynomial-valued moulds in the~$u_i$. 
\item\label{th:maprops:2} A polynomial~$P\in\Lie[a,b]$ is~$\push$-invariant if and only
if~$\ma(P)$ is a~$\push$-invariant mould, and
the space~$\ARI_{\push}$ forms a~Lie algebra under the~$\ari$-bracket.
\item\label{th:maprops:3} Let~$Q\in\Lie[a,b]$ and~$P=[a,Q]$.  Then for each~$r\ge 1$, we have
\[\ma(P)(u_1,\ldots,u_r)=-(u_1+\cdots+u_r)\ma(Q)(u_1,\ldots,u_r)\text.\]
\item\label{th:maprops:4} The map~$\ma$ restricts~to an isomorphism
\[\ls\rightarrow \ARI^{\pol}_{\underline{\al}/\underline{\al}}\text,\]
of image the set of moulds in~$\ARI^{\pol}_{\al/\al}$ whose depth~$1$ part is an even function.
\item\label{th:maprops:5} The space~$\ARI_{\underline{\al}/\underline{\al}}$ is a~Lie algebra under 
the~$\ari$-bracket, and is contained in~$\ARI_{\push}$.
\end{enumerate}
\end{theo}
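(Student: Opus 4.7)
The plan is to treat the five parts in turn. Parts (i)--(iii) reduce to direct combinatorial translations between the word basis of $\Lie[a,b]$ and the commutative-variable description of moulds, while parts (iv)--(v) rely on foundational results of Ecalle that one either translates or invokes as black boxes. A key preliminary is the Lazard decomposition $\Lie[a,b] = \Q a \oplus \Lie[C_1,C_2,\ldots]$ with $C_i = \ad(a)^{i-1}(b)$: since $\ma$ kills $a$, we may restrict attention to the second summand, on which $\ma$ takes values in polynomial moulds in the $u_i$.

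For part (i), the image being exactly $\ARI^{\pol}_{\al}$ follows from Ree's theorem applied to the free associative algebra on the $C_i$: a polynomial there lies in the free Lie subalgebra if and only if its coefficient function is a shuffle, which the definition of $\ma$ converts into the alternality relations in the $u_i$. The identity $\ma(\{f,g\}) = \ari(\ma_f,\ma_g)$ is then a direct unpacking of the explicit $\ari$-formula recalled in Appendix~A against the Poisson bracket $[P,Q] + D_P(Q) - D_Q(P)$; one matches the two $\arit$-pieces of $\ari$ with $D_P(Q)$ and $-D_Q(P)$, the bare commutator corresponding to $[P,Q]$, and verifies the identity on monomials $C_{i_1}\cdots C_{i_r}$. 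For part (ii), one tracks how the cyclic permutation $\push$ on a word $a^{i_0}ba^{i_1}\cdots ba^{i_r}$ translates after passage to the Lazard basis: it becomes precisely the mould substitution $(u_1,\ldots,u_r)\mapsto(-u_1-\cdots-u_r,u_1,\ldots,u_{r-1})$, with the leading $-u_1-\cdots-u_r$ encoding the total-degree contribution of the trailing $a^{i_r}$ block. For part (iii), the $\ad$-action of $a$ extends to a derivation of $\Q\langle C_1,C_2,\ldots\rangle$ sending $C_i$ to $C_{i+1}$, so
\[ [a,C_{i_1}\cdots C_{i_r}] = \sum_{j=1}^r C_{i_1}\cdots C_{i_j+1}\cdots C_{i_r}; \]
under $\ma$ each term contributes a factor $-u_j$ times the original monomial, and summing yields the advertised factor $-(u_1+\cdots+u_r)$.

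For part (iv), the defining relations of $\ls$ translate as follows. The condition that $P_*$ lie in $\Lie[b_1,b_2,\ldots]$ is, again by Ree, a shuffle condition in the $b_i$ matching alternality of $\ma(P)$, while the second shuffle relation for $P$ in the $a,b$ variables, transported through $\swap$, becomes alternality of $\swap(\ma(P))$. The delicate point is matching the truncation and correction term $\sum(-1)^{i-1}(P|a^{i-1}b)b^i/i$ defining $P_*$ with the evenness of the depth-$1$ part of $\ma(P)$, which amounts to the vanishing of $(P|a^{i-1}b)$ for even $i$ and is exactly the parity constraint singling out the prescribed image. For part (v), the closure of $\ARI_{\underline{\al}/\underline{\al}}$ under $\ari$ and its inclusion in $\ARI_{\push}$ are theorems of Ecalle, which I would cite from \cite{EcalleFlex,Sonline} rather than reprove. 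The main obstacles will be the bracket-preservation in (i), which requires careful cancellation between the $\arit$-terms and the derivation terms $D_P$, $D_Q$, and the depth-$1$ parity matching in (iv); the Ecalle results invoked in (v) are deep but treated here as black boxes.
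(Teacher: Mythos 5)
The paper does not prove Theorem~\ref{maprops}: immediately after the statement it remarks that ``all these results constitute a standard part of mould theory'' and refers the reader to \cite{EcalleFlex}, \cite{RacinetArticle}, \cite{Baumard} and (for complete proofs) \cite{Sonline}; the compatibility of $\ma$ with the brackets in part~\eqref{th:maprops:1} is further pinned to Corollary~3.3.4 of \cite{Sonline} in the remark following Theorem~\ref{these}. So there is no internal argument to compare against, and your sketch must be judged on its own.

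On that basis, most of what you write is the right strategy: the reduction to the Lazard decomposition, the monomial computation in part~\eqref{th:maprops:3} (which is correct as written), the appeal to Ree's criterion, the reference to \cite{EcalleFlex,Sonline} for part~\eqref{th:maprops:5}, and the observation that the evenness of the depth-$1$ part in~\eqref{th:maprops:4} is exactly the vanishing of $(P\,|\,a^{i-1}b)$ for even $i$ are all as one would expect.

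There is, however, a genuine confusion in part~\eqref{th:maprops:4}: you have the two halves of the double-shuffle condition attached to the wrong alternalities. The map $\ma$ is defined on monomials in the Lazard generators $C_i=\ad(a)^{i-1}(b)$, so the hypothesis that $P\in\Lie[a,b]$ (equivalently $P\in\Lie[C_1,C_2,\ldots]$), i.e.\ the shuffle condition in the $a,b$-alphabet, is what Ree translates into alternality of $\ma(P)$ in the $u_i$. It is the \emph{other} condition, that $P_*$ lies in $\Lie[b_1,b_2,\ldots]$ with $b_i=a^{i-1}b$ the associative monomial, which lives in the ``words ending in $b$'' presentation and which, after the change of variables $\swap$, becomes alternality of $\swap(\ma(P))$ in the $v_i$. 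Your sentence states exactly the reverse pairing, and as written the argument for part~\eqref{th:maprops:4} would not go through.
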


All these results constitute a standard part of mould theory, and can 
be found scattered through various texts~(see~\cite{EcalleFlex} for the statements
not concerning~$\Lie[a,b]$, or~\hbox{\cite{RacinetArticle,Baumard}}; 
see \cite{Sonline} for a basic introduction~to the domain containing complete
proofs.)

\begin{defi}For each~$r\ge 1$, let 
\[\De_r(u_1,\ldots,u_r)=u_1\cdots u_r(u_1+\cdots+u_r).\]
In particular~$\De_1(u_1)=u_1^2$. Let~$\ARI^{\sing}$ denote the space 
of rational-function valued moulds~$A$ such that~$\De_rA_r(u_1,\ldots,u_r)$
is polynomial for each~$r\ge 1$.  
\end{defi}

The first main result of this article is the following theorem, which is
a key result in the application of mould theory~to elliptic motives.
Its rather lengthy proof is deferred~to~Appendix~\ref{app:A}.

\begin{theo}[{\cite[th.~4.45]{Baumard}}]\label{these}The space 
\[\ARI_{\underline{\al}/\underline{\al}}^{\sing}=
\ARI_{\underline{\al}/\underline{\al}}\cap \ARI^{\sing}\]
is a~Lie algebra under the ari-bracket.
\end{theo}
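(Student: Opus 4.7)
My plan is as follows. Since Theorem~\ref{maprops}~(\ref{th:maprops:5}) already guarantees that $\ARI_{\underline{\al}/\underline{\al}}$ is a Lie algebra under the $\ari$-bracket, for any $A,B\in\ARI^{\sing}_{\underline{\al}/\underline{\al}}$ the bracket $C:=\ari(A,B)$ is automatically bialternal. The only remaining task is to verify the singularity condition, namely that $\De_r\cdot C_r$ is a polynomial in $u_1,\ldots,u_r$ for each $r\ge 1$.

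To do this I will work directly from the explicit formula for the $\ari$-bracket recalled in Appendix~\ref{app:A}. At depth $r$, each term of $\arit(B)\cdot A$ and $\arit(A)\cdot B$ evaluates $A$ and $B$ at sequences built out of consecutive partial sums of the form $u_{[i,j]}:=u_i+u_{i+1}+\cdots+u_j$. Since each $A_s,B_s$ belongs to $\ARI^{\sing}$, its denominator is a product of factors $u_k$ and $u_1+\cdots+u_s$ in its own variables; after the substitution, the denominator of each term in $C_r$ is therefore a product of partial-sum factors $u_{[i,j]}$. The claim to prove is that after collecting and simplifying, the only surviving denominator factors are the ``diagonal'' ones $u_k=u_{[k,k]}$ for $k=1,\ldots,r$, and the ``total sum'' $u_{[1,r]}$, which are exactly the factors accounted for by $\De_r$.

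The heart of the proof is a residue analysis. For each pair $(i,j)$ with $1\le i\le j\le r$ different from $(k,k)$ for some $k$ and different from $(1,r)$, I have to show that $C_r$ has no pole along $u_{[i,j]}=0$. The strategy is to compute the putative residue: specializing $u_{[i,j]}=0$ collapses a block of variables, and the finite sum of terms in $\arit(B)\cdot A$ and $\arit(A)\cdot B$ that survive the specialization can be reorganized, via a short manipulation of the flexion substitutions, into a shuffle sum of values of $A$ (or of $B$) on the remaining arguments. The alternality of $A$ and $B$ then forces this sum to vanish. Push-invariance (Theorem~\ref{maprops}~(\ref{th:maprops:5})) is useful as a check, since the residues at $u_{[1,j]}=0$ and at $u_{[i,r]}=0$ should be exchanged under push, reducing the number of independent residue computations.

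The main obstacle is the combinatorial bookkeeping: correctly matching, for a given spurious hyperplane $u_{[i,j]}=0$, the subset of terms in the $\ari$ expansion that survive the specialization with a full alternality relation on the correct subset of arguments. Once this matching is achieved and the residues are shown to vanish, the theorem is immediate: a rational function in $u_1,\ldots,u_r$ whose only possible polar hyperplanes are $u_1=0,\ldots,u_r=0$ and $u_1+\cdots+u_r=0$, each contributing at most a simple pole in the generic case (and a double pole in depth $1$, matching $\De_1=u_1^2$), becomes a polynomial after multiplication by $\De_r$.
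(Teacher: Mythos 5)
Your high-level plan --- reduce to checking the singularity condition and then do a residue analysis --- is the right instinct, but as sketched it misses the two structural ideas that actually make the paper's proof work, and where it differs the argument would break down.

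First, the paper does \emph{not} carry out the residue analysis in the $u$-variables directly on $\De_r=u_1\cdots u_r(u_1+\cdots+u_r)$. It passes to the $v$-variables through $\swap$, working in $\swapbar{\ARI}^{\sing}_{\al}$ (moulds~$M$ with~$\Check M=\De_v\,M$ polynomial, $\De_v=v_1(v_1-v_2)\cdots(v_{r-1}-v_r)v_r$), and only returns to~$\ARI$ at the very end via the identity $\swap(\ari(A,B))=\ari(\swap A,\swap B)$, which is \emph{not} unconditional: it holds on $\ARI_{\push}$, so bialternality (hence push-invariance) is used in an essential way in the transfer. Your plan never mentions this swap-equivariance step, and you use only the $u$-alternality of~$A,B$; but the residue analysis in Appendix~A runs entirely on the \emph{other} half of bialternality, namely the alternality of~$\swap A,\swap B$ in the~$v$-variables.

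Second, the mechanism by which the spurious residues vanish is not ``the surviving terms assemble into an alternality (shuffle) relation.'' Each spurious pole contributes a one- or two-term residue, and the cancellation is governed by the single boundary identity of Lemma~\ref{lem:a3}: $\Check M(0,v_2,\ldots,v_r)=\Check M(v_2,\ldots,v_r,0)$. That lemma is extracted from the depth-one alternality relation \emph{because} $\De_v$ is asymmetric: permuting the~$v_i$ changes its factor pattern, so multiplying the relation by~$v_1$ and specializing at~$v_1=0$ kills all but two terms. In~$u$-variables there is no analogue of this: $\De_r$ is a symmetric function of the~$u_i$, so every term of the depth-one alternality relation carries exactly the same denominator~$\De_r$, and the same manipulation yields only the tautological alternality relation for~$\De_r A$ with no new boundary information. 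This is precisely what you would need but do not have when you specialize a spurious hyperplane $u_{[i,j]}=0$; absent it, the pairwise cancellations of the type displayed in the computation leading to~\eqref{eq:A2}--\eqref{eq:A3} have nothing to close them up.

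So the gap is not ``combinatorial bookkeeping''; it is the absence of any $u$-variable substitute for Lemma~\ref{lem:a3}, together with the omission of the push-invariance hypothesis needed to move the computation between~$\ARI$ and~$\swapbar{\ARI}$. Both are supplied by bialternality, and both are load-bearing in the paper's proof.
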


\vspace{.2cm}
\noindent {\bf Remark.} It is shown in \cite{Sonline}, Corollary 3.3.4, that 
the $\ari$-bracket restricted to {\it polynomial-valued} moulds corresponds to 
the Poisson (or Ihara) bracket.  Therefore it follows from (a graded version
of) Racinet's famous theorem (cf. \cite{RacinetArticle}) on the Lie algebra 
structure of the double shuffle Lie algebra that the
linearized double shuffle space is also Lie algebra under the Poisson bracket,
or equivalently, in mould language, that the space of polynomial bialternal 
moulds, $\ARI_{\underline{\al}/\underline{\al}}^{\pol}$, forms a Lie algebra 
under the $\ari$-bracket.  But Theorem \ref{these} above generalizes this 
statement
to the much larger space $\ARI_{\underline{\al}/\underline{\al}}^{\sing}$, 
which contains moulds with denominators of type $\De_r$, so the key new point 
(cf. Appendix) is the proof that the $\ari$-bracket of two such moulds has the 
same type of denominator.

\begin{defi}
For~$i\ge -1$, let~$U_{2i}$ be the mould such that 
$U_{2i}(u_1)=u_1^{2i}$ in depth~$1$, and~$U_{2i}(u_1,\ldots,u_r)=0$
for every depth~$r\ne 1$ (we say that~$U_{2i}$ is concentrated
in depth~$1$).  Let 
$\mathcal{U}$ be the~Lie subalgebra of~$\ARI$ generated~by the~$U_{2i}$. 
Since~$\De_1=u_1^2$, we have~$U_{2i}\in \ARI^{\sing}$ for~$i\ge -1$, so thanks
to~Theorem~\ref{these}, we have the inclusion~$\mathcal{U}\subset 
\ARI_{\underline{\al}/\underline{\al}}^{\sing}$.
\end{defi}

Let~$\Der^0\Lie[a,b]$ denote the subspace of
$\Der\Lie[a,b]$ consisting of the derivations that kill~$[a,b]$, and such
that the value of the derivation on $a$ has no linear term in $a$.
For every polynomial~$F\in\Lie[a,b]$, let 
$\Da(F)$ denote the mould which is given in depth~$r$~by
\[\Da(F)(u_1,\ldots,u_r)=(1/\De_r)\,\ma(F)(u_1,\ldots,u_r).\]
Define a map
$\Psi:\Der^0\Lie[a,b]\rightarrow \ARI$~by
\[\Psi(D)=\Da\bigl(D(a)\bigr).\]

The following proposition gives the key relationship between brackets
of derivations of~$\Lie[a,b]$ and the~$\ari$-bracket on moulds.

\begin{theo}\label{B3} 
The map~$\Psi$ is an injective~Lie algebra morphism, i.e.
\[\Psi\bigl([D_1,D_2]\bigr)\mapsto \ari\Bigl(\Da\bigl(D_1(a)\bigr),
\Da\bigl(D_2(a)\bigr)\Bigr).\]
\end{theo}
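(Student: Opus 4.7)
The plan is to prove the two claims in the statement separately: injectivity of $\Psi$, and its compatibility with brackets.

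For injectivity, I would argue as follows. Suppose $\Psi(D) = \Da(D(a)) = 0$; since each $\De_r$ is a nonzero polynomial, this forces $\ma(D(a))(u_1,\ldots,u_r) = 0$ at every depth $r$, so by the injectivity of $\ma$ on $\Lie[a,b]$ (Theorem~\ref{maprops}~\eqref{th:maprops:1}), $D(a) = 0$. Combined with $D([a,b]) = 0$, this yields $[a, D(b)] = 0$; since the centralizer of $a$ in the free Lie algebra $\Lie[a,b]$ is $\Q a$, we obtain $D(b) \in \Q a$, and under the standing positive-weight convention (which excludes the degenerate weight-zero derivation $a\mapsto 0$, $b\mapsto a$), this forces $D = 0$.

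For the Lie-algebra morphism property, the goal is to establish the identity
\[
\Da\bigl([D_1, D_2](a)\bigr) = \ari\bigl(\Da(D_1(a)), \Da(D_2(a))\bigr).
\]
The plan is to expand the left-hand side via $[D_1, D_2](a) = D_1(P_2) - D_2(P_1)$, with $P_i = D_i(a)$, and translate the Leibniz-rule action of each $D_i$ on $P_j$ into explicit mould operations, to be compared with the formula for $\ari$ recalled in Appendix~A. Writing each $D_i = D_i^a + D_i^b$ with $D_i^a(a)=P_i$, $D_i^a(b)=0$ and $D_i^b(a)=0$, $D_i^b(b)=Q_i$ (where $Q_i$ is determined from $P_i$ by push-invariance via Proposition~\ref{LKV}~\eqref{prop:LKV:i}), the contribution of the $b$-parts $D_i^b$ on $P_j$ is essentially the Poisson-bracket action of the definition preceding Theorem~\ref{Gonch}, and Theorem~\ref{maprops}~\eqref{th:maprops:1} transports it directly to the $\ari$-bracket on polynomial moulds. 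The $a$-parts $D_i^a$ do not correspond to Poisson derivations, but using the defining relation $[P_i, b] + [a, Q_i] = 0$ together with the mould identity $\ma([a, Q])(u_1,\ldots,u_r) = -(u_1 + \cdots + u_r)\,\ma(Q)$ from Theorem~\ref{maprops}~\eqref{th:maprops:3}, one rewrites their contribution also as a flexion-type term on the mould side, which should produce the remaining pieces of the Ecalle $\ari$ formula.

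The main obstacle will be the combinatorial bookkeeping: verifying that the terms produced by the Leibniz expansion of $D_1(P_2) - D_2(P_1)$, passed through $\Da$ and assembled, reproduce exactly the flexion expansion of $\ari$, with correct signs and denominators. Here the factor $1/\De_r = 1/\bigl(u_1\cdots u_r(u_1 + \cdots + u_r)\bigr)$ built into $\Da$ plays a decisive role: the factor $(u_1+\cdots+u_r)$ produced by Theorem~\ref{maprops}~\eqref{th:maprops:3} and the individual variables $u_i$ appearing in the flexion cuts combine with this denominator to yield precisely the simple poles that arise naturally on the right-hand side. That the pieces fit together with no spurious singularities is moreover consistent with Theorem~\ref{these}, and keeping track of all signs, cut positions, and the push-invariance of $P_i$ through the calculation is where the main effort of the proof will lie.
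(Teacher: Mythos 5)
Your injectivity argument is essentially the one the paper gives (injectivity of $\ma$ recovers $D(a)$; $D$ is then determined by $D(a)$ within $\Der^0\Lie[a,b]$), so that part is fine.

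For the morphism property there is a genuine gap, both conceptual and structural. Conceptually: you decompose $D_i = D_i^a + D_i^b$ with $D_i^b(a)=0$, $D_i^b(b)=Q_i$, and claim that $D_i^b$ is ``essentially the Poisson-bracket action.'' But the Poisson derivation $D_P$ is defined by $D_P(b)=[b,P]$, not $D_P(b)=Q_i$; the element $Q_i$ determined from $P_i$ via $[P_i,b]+[a,Q_i]=0$ is in general \emph{not} of the form $[b,R_i]$ for any Lie element $R_i$, so $D_i^b$ is not a Poisson derivation and Theorem~\ref{maprops}~\eqref{th:maprops:1} does not transport its action to an $\ari$-type term on moulds. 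Structurally: you have not isolated the intermediate lemma that actually carries the paper's proof. The paper introduces the operator $\Darit_U\cdot A=-\arit(\Da(U))\cdot A-\lu(A,\Da(U))$ and proves (Proposition~\ref{prop:B1}), by induction on the depth of $F$, the key identity $-\da(D_U(F))=\Darit_U\cdot\da(F)$ for \emph{every} Lie $F$; the base case in depth one is an explicit mould computation using the constraint $D_U([a,b])=0$ to handle the $a$- and $b$-parts together rather than separately, and the inductive step uses that $\Darit_U$ is a $\lu$-derivation. Once that identity is in hand, the bracket formula $[D_U,D_V]\mapsto\ari(\Da(U),\Da(V))$ follows from purely formal manipulations (Jacobi plus $\arit(B)\arit(A)-\arit(A)\arit(B)=\arit(\ari(A,B))$), and the ``combinatorial bookkeeping'' you flag as the main obstacle has been localized to a single depth-one computation. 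Without such a reduction, the bookkeeping you defer to is essentially the whole theorem, and the decomposition you propose does not make it tractable because the $b$-part has no Poisson interpretation.
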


The rather long proof of this theorem is deferred~to~Appendix~\ref{app:B}.  Note
however that the injectivity is easy, since~$D(a)$ can be recovered from
$\Da(D(a))$, and it follows from Proposition~\ref{LKV}
that a derivation in~$\Der^0\Lie[a,b]$ is determined~by its
value on~$a$.
 
The next result\footnote{This result is also given
in \S 4 of the preprint \cite{newEnriquez}~by B. Enriquez from January 2013, 
with a brief indication in lieu of proof.  Brown (cf \cite{Bnotes}, \cite{Brown3}) studies a Lie algebra denoted
$\pls$ (polar linearized double shuffle) which is isomorphic to
$\swap\Bigl(\ARI^{\sing}_{\al/\al}\Bigr)$.  In the recent preprint 
\cite{Brown3}, Brown mentions without proof the analogous result to Theorem 
\ref{these} for $\pls$ (see Definition 4.5), i.e.~that $\pls$ is closed under 
the ``linearized Ihara bracket''. He also gives results that are analogous to 
Theorem \ref{B3} and Corollary 3.6.}
uses this proposition~to show that~$\mathcal{U}$ is 
isomorphic~to~Pollack's~Lie algebra~$\E$.  
\begin{cor}[\cite{newEnriquez}]\label{Psi}
We have~$\Psi(\eps_{2i})=U_{2i-2}\in\mathcal{U}$ for all~$i\ge 0$, and 
the map~$\Psi$ induces an isomorphism~$\E\simeq \mathcal{U}$ of~Lie algebras.
\end{cor}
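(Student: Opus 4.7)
The strategy is to leverage Theorem~\ref{B3}, which already establishes that $\Psi$ is an injective Lie algebra morphism on $\Der^0\Lie[a,b]$; once this is in hand, the corollary reduces to a direct computation on the generators $\eps_{2i}$ together with an identification of the image.

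First I would verify that each $\eps_{2i}$ belongs to $\Der^0\Lie[a,b]$: by construction $\eps_{2i}([a,b])=0$, and $\eps_{2i}(a)=\ad(a)^{2i}(b)$ contains no linear-in-$a$ term, so $\Psi(\eps_{2i})$ is well-defined. Next I would compute $\Psi(\eps_{2i})$ explicitly. Using the Lazard basis $C_k=\ad(a)^{k-1}(b)$, the equality $\eps_{2i}(a)=\ad(a)^{2i}(b)=C_{2i+1}$ exhibits $\eps_{2i}(a)$ as a single monomial concentrated in depth~$1$. Applying the definition of $\ma$ then gives $\ma(C_{2i+1})(u_1)=(-1)^{2i}u_1^{2i}=u_1^{2i}$, so dividing by $\De_1=u_1^2$ yields $\Psi(\eps_{2i})(u_1)=u_1^{2i-2}$, with all higher-depth components vanishing. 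This matches the defining formula for $U_{2i-2}$ on the nose.

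With this computation in place, the remainder is assembly: by Theorem~\ref{B3}, the restriction of $\Psi$ to $\E\subset\Der^0\Lie[a,b]$ is an injective Lie algebra morphism, and its image is the Lie subalgebra of $\ARI$ generated by the moulds $\Psi(\eps_{2i})=U_{2i-2}$ for $i\ge 0$, which is precisely $\mathcal{U}$ by definition. Since Theorem~\ref{B3} does the heavy lifting, there is no real obstacle here; the only mild subtlety is the edge case $i=0$, where $\Psi(\eps_0)=U_{-2}$ carries a $1/u_1^2$ pole and does not land in $\ARI^{\pol}$. This is exactly why the enlargement to $\ARI^{\sing}$ together with Theorem~\ref{these} is invoked, to guarantee that repeated $\ari$-brackets of the $U_{2i-2}$ stay within $\ARI_{\underline{\al}/\underline{\al}}^{\sing}$, so that $\mathcal{U}$ is a bona fide Lie subalgebra of $\ARI$ containing all the images $\Psi(\eps_{2i})$.
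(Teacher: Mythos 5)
Your proof is correct and follows the same route as the paper's: compute $\ma(\eps_{2i}(a))=\ma(C_{2i+1})=u_1^{2i}$, divide by $\De_1=u_1^2$ to get $U_{2i-2}$, and invoke Theorem~\ref{B3} for the injective Lie morphism, so the image of $\E$ is the Lie subalgebra generated by the $U_{2i-2}$, namely $\mathcal{U}$. The extra checks you add (that $\eps_{2i}\in\Der^0\Lie[a,b]$, and the remark about $U_{-2}$ having a pole) are sound, though the latter is not actually needed for the Corollary itself -- $\mathcal{U}$ is a Lie subalgebra of $\ARI$ by construction regardless of Theorem~\ref{these}, which is instead used later to place $\mathcal{U}$ inside $\ARI^{\sing}_{\underline{\al}/\underline{\al}}$.
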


\begin{proof} Since~$\eps_{2i}(a)=\ad(a)^{2i}(b)=C_{2i+1}$, the mould
$\ma(\eps_{2i})$ is the mould in depth~$1$ that takes
the value~$u_1^{2i}$, so~$\Psi(\eps_{2i})=\Da(D(a))=\Da(C_{2i+1})
=U_{2i-2}$.  Then~by~Theorem~\ref{B3}, the map~$\Psi$ 
restricted~to the~Lie algebra 
$\E\subset \Der^0\Lie[a,b]$ generated~by the~$\eps_{2i}$ 
yields a~Lie algebra isomorphism~to~$\mathcal{U}$.\end{proof}

%
%

\section{Proof of the main result}\label{sec4}

We begin~by translating~Theorem~\ref{Gonch}~to a statement on moulds.

\begin{theo}\label{mouldGonch}Suppose~$A$ is a bialternal 
polynomial-valued mould concentrated in depth~$3$. Then~$A$ lies in the~Lie algebra~$\mathcal{U}$, and more precisely it can be written
as a linear combination of moulds of
the form~$\ari\bigl(U_{2r},\ari(U_{2s},U_{2t})\bigr)$ with~$r,s,t\ge 1$.
\end{theo}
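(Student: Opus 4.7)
The plan is to translate the mould statement into a Lie-polynomial statement via Theorem~\ref{maprops}(iv), apply Goncharov's Theorem~\ref{Gonch} there, and translate back using Theorem~\ref{maprops}(i). Throughout I will use the key dictionary that $\ma$ intertwines the Poisson bracket on $\Lie[a,b]$ with the $\ari$-bracket on $\ARI^{\pol}_{\al}$.

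First I would observe that since $A$ is concentrated in depth~$3$, its depth~$1$ component vanishes and is in particular an even function, so $A$ lies in $\ARI^{\pol}_{\underline{\al}/\underline{\al}}$. By Theorem~\ref{maprops}(iv), the map $\ma$ restricts to an isomorphism $\ls \to \ARI^{\pol}_{\underline{\al}/\underline{\al}}$; since this isomorphism preserves depth, there is a unique Lie polynomial $P \in \ls_n^3$ such that $\ma(P) = A$.

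Next I would apply Goncharov's Theorem~\ref{Gonch} to $P$: as a homogeneous depth-$3$ element of $\ls$, it admits a decomposition
\[
 P = \sum_{r,s,t\geq 2} c_{r,s,t}\,\bigl\{\ad(a)^{2r}(b),\{\ad(a)^{2s}(b),\ad(a)^{2t}(b)\}\bigr\}
\]
as a $\Q$-linear combination of Poisson triple brackets, where each exponent is at least~$2$. Applying $\ma$ to both sides, and using Theorem~\ref{maprops}(i) to transport Poisson brackets to $\ari$-brackets, yields
\[
 A = \ma(P) = \sum_{r,s,t\geq 2} c_{r,s,t}\,\ari\bigl(\ma(C_{2r+1}),\ari(\ma(C_{2s+1}),\ma(C_{2t+1}))\bigr),
\]
where $C_{i} = \ad(a)^{i-1}(b)$. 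It remains only to recognise the moulds on the right: by the explicit formula for $\ma$, we have $\ma(C_{2r+1}) = (-1)^{2r} u_1^{2r} = u_1^{2r}$, concentrated in depth~$1$, which is exactly $U_{2r}$. This gives the claimed decomposition, with the sharper constraint $r,s,t \geq 2$, and hence shows $A \in \mathcal{U}$.

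The argument is essentially a translation via the dictionary provided by Theorem~\ref{maprops}; there is no serious obstacle. The only points requiring care are (a) checking that the hypotheses of Theorem~\ref{maprops}(iv) are met (which is immediate once one notes the depth-$1$ component is zero), and (b) matching $\ma(C_{2r+1})$ with the generator $U_{2r}$ of $\mathcal{U}$, which is a direct comparison of the defining formulas. Notice that the depth shift between $\eps_{2i}$ and $U_{2i-2}$ coming from the $\Psi$-isomorphism of Corollary~\ref{Psi} plays no role here: we are working directly with the $\ma$-image of Goncharov's Lie decomposition, not with the derivation-level isomorphism $\Psi$.
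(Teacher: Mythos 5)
Your overall strategy is correct and is essentially what the paper intends: use Theorem~\ref{maprops}~\eqref{th:maprops:4} to write $A=\ma(P)$ with $P\in\ls_n^3$, apply Goncharov's Theorem~\ref{Gonch}, then push back through $\ma$ using Theorem~\ref{maprops}~\eqref{th:maprops:1}. Your computation $\ma(C_{2r+1})=u_1^{2r}=U_{2r}$ is the right one (and is cleaner than the statement ``$\ma(\ad(a)^{2r+2}(b))=U_{2r}$'' appearing in the paper's proof, which does not match the definition of $\ma$: that mould is $u_1^{2r+2}=U_{2r+2}$).

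However, be careful with the final sentence claiming a ``sharper constraint $r,s,t\ge 2$'': that strengthening cannot actually hold, and the discrepancy is worth understanding. If Theorem~\ref{mouldGonch} gave $r,s,t\ge 2$, then the main theorem of section~\ref{sec4} would yield a decomposition of every $D\in\Theta^3\E$ of depth~$3$ in terms of $[\eps_{2r+2},[\eps_{2s+2},\eps_{2t+2}]]$ with all indices at least~$6$, forcing weight $\ge 18$. But the example~\eqref{eqex} shows $\Rrel{\Delta,3}$ (weight~$16$) lies in $\Theta^3\E$ and is a bracket of $\eps_6,\eps_6,\eps_4$; no combination of $\eps_{\ge 6}$'s can have weight~$16$. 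The source of the contradiction is almost certainly an index slip in the paper's statement of Theorem~\ref{Gonch}: the condition should read $r,s,t\ge 1$, so that the exponents $2r,2s,2t$ are all $\ge 2$ --- this in fact matches the remark just before the theorem (``the powers of $\ad(a)$ that appear in the statement are all $\ge 2$''). Under the corrected Theorem~\ref{Gonch}, your argument goes through verbatim and yields $r,s,t\ge 1$, exactly as stated in Theorem~\ref{mouldGonch}. In short: the method is sound and matches the paper, but the claimed strengthening is an artifact of an indexing typo upstream, and you should flag the inconsistency with the example rather than advertise the $\ge 2$ bound.
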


Indeed,~by~\eqref{th:maprops:1} of~Theorem~\ref{maprops}, the map ma gives an isomorphism
from the space of depth~$3$~Lie polynomials in~$\ls$~to the space of
depth~$3$ moulds in~$\ARI^{\pol}_{\al/\al}$, and~by~\eqref{th:maprops:4} of the same~Theorem, 
since~$\ma\bigl(\ad(a)^{2r+2}(b)\bigr)=U_{2r}$, we have
\[\ma\Bigl(\bigl\{\ad(a)^{2r+2}(b),\{\ad(a)^{2s+2}(b),\ad(a)^{2t+2}(b)\}\bigr\}
\Bigr) =\ari(U_{2r},\ari(U_{2s},U_{2t})),\] so the statement
of~Theorem~\ref{mouldGonch} is equivalent~to that of~Theorem~\ref{Gonch}.

We can now prove the main result of this article.

\begin{theo} Fix a positive integer~$n$, and let~$D\in\E$ be a linear 
combination of terms of the form~$\bigl[\eps_{2i},
[\eps_0,\eps_{2j}]\bigr]$ 
with~$i,j\ge 2$. Assume that~$D\in\Theta^3\E$.  Then~$D$ can be
written as a linear combination of terms of the form
$\bigl[\eps_{2r+2},[\eps_{2s+2},\eps_{2t+2}]\bigr]$ with~$r,s,t\ge 1$.
\end{theo}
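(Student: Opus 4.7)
The plan is to translate the problem into mould language via the Lie algebra isomorphism $\Psi:\E\to\mathcal U$ of Corollary \ref{Psi}, which sends $\eps_{2i}$ to $U_{2i-2}$. Under $\Psi$, the target statement becomes: the depth-$3$ mould $A:=\Psi(D)\in\mathcal U$, originally a linear combination of moulds $\ari\bigl(U_{2i-2},\ari(U_{-2},U_{2j-2})\bigr)$ with $i,j\geq 2$, can be rewritten as a linear combination of $\ari\bigl(U_{2r},\ari(U_{2s},U_{2t})\bigr)$ with $r,s,t\geq 1$---that is, with the singular generator $U_{-2}$ eliminated. Since Theorem \ref{mouldGonch} delivers exactly such a decomposition for any bialternal \emph{polynomial-valued} mould concentrated in depth $3$, it suffices to prove that $A$ is in fact polynomial.

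The hypothesis $D\in\Theta^3\E$, combined with Proposition \ref{propP4}, gives $D(a)=[a,Q]$ for some $Q\in\Lie[a,b]$. By Theorem \ref{maprops}\eqref{th:maprops:3}, this translates in depth $3$ to $\ma(D(a))(u_1,u_2,u_3)=-(u_1+u_2+u_3)\,\ma(Q)(u_1,u_2,u_3)$; dividing by $\De_3=u_1u_2u_3(u_1+u_2+u_3)$ yields
\[
A(u_1,u_2,u_3)=-\frac{\ma(Q)(u_1,u_2,u_3)}{u_1u_2u_3}.
\]
Thus the factor $(u_1+u_2+u_3)$ which a priori appears in the denominator of a general element of $\ARI^{\sing}$ has cancelled, and the only remaining poles of $A$ lie along the coordinate hyperplanes $u_i=0$, $i=1,2,3$.

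The main obstacle is then to eliminate these remaining poles, i.e.\ to show that $\ma(Q)$ is divisible by $u_1u_2u_3$. Alternality of $A$ itself is already encoded in the fact that $Q$ is a Lie polynomial, so it gives no new information. The decisive input comes from the alternality of $\swap(A)$, combined with the push-invariance of $A$ (both of which hold because $A\in\mathcal U\subset\ARI^{\sing}_{\underline{\al}/\underline{\al}}\cap\ARI_{\push}$) and the absence of the pole along $u_1+u_2+u_3=0$ established above. Concretely, under the change of variables $u_1=v_3$, $u_2=v_2-v_3$, $u_3=v_1-v_2$, the three coordinate hyperplanes $u_i=0$ become $v_3=0$, $v_2-v_3=0$ and $v_1-v_2=0$; computing the residues of the swap-alternality identities along each of these hyperplanes produces a linear system relating the three residues $\ma(Q)(0,u_2,u_3)$, $\ma(Q)(u_1,0,u_3)$, $\ma(Q)(u_1,u_2,0)$, and the push symmetry cycling them (together with the cancellation of the $v_1$-pole corresponding to the vanishing $u_1+u_2+u_3$-pole) forces all three residues to vanish simultaneously. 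This step is where the use of rational functions with $\De_r$-type denominators, permitted by Theorem \ref{these}, is essential.

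Once $A$ is shown to be polynomial, Theorem \ref{mouldGonch} writes it as a linear combination of moulds $\ari(U_{2r},\ari(U_{2s},U_{2t}))$ with $r,s,t\geq 1$, and transporting this identity back through $\Psi^{-1}$ via Corollary \ref{Psi} yields the required expression of $D$ as a linear combination of $[\eps_{2r+2},[\eps_{2s+2},\eps_{2t+2}]]$ with $r,s,t\geq 1$.
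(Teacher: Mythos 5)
Your overall strategy --- translate to moulds via~$\Psi$, show that~$\Psi(D)$ is a \emph{polynomial} bialternal mould concentrated in depth~$3$, invoke Theorem~\ref{mouldGonch}, and transfer back through~$\Psi^{-1}$ --- is exactly the paper's. The one place you diverge is the intermediate step of showing that~$\ma(Q)$ (equivalently, that~$\ma(D(a))$) is divisible by~$u_1u_2u_3$. There you bring in alternality of~$\swap(A)$ alongside push-invariance and gesture at a residue/linear-system argument that is never actually carried out; this is both more machinery than is needed and, as written, not a proof. In the paper this divisibility is a one-line consequence of push-invariance alone: $\ma(D(a))$ is a push-invariant polynomial already known to be divisible by~$(u_1+u_2+u_3)$, and push-invariance reads
\[
\ma(D(a))(u_1,u_2,u_3)=\ma(D(a))(-u_1-u_2-u_3,\,u_1,\,u_2),
\]
whose right-hand side is manifestly divisible by~$(-u_1-u_2-u_3)+u_1+u_2=-u_3$; hence~$\ma(D(a))$ is divisible by~$u_3$, and iterating the substitution twice more yields divisibility by~$u_2$ and~$u_1$, so~$\ma(D(a))$ is divisible by~$\De_3$ and~$\Psi(D)=\Da(D(a))$ is polynomial. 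You already name both the push-invariance of~$A$ and the absence of a pole along~$u_1+u_2+u_3=0$ as the decisive ingredients, so the fix is simply to drop the detour through swap-alternality (which is needed only inside the proof of Theorem~\ref{mouldGonch}, not here) and run this direct substitution argument. Everything else in your outline is correct and coincides with the paper's proof.
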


\begin{proof} By Proposition~\ref{propP4}, since~$D\in\Theta^3\E$, there 
exists a polynomial~$Q\in\Lie[a,b]$ such that~$D(a)=[a,Q]$. By~\eqref{th:maprops:3}
of~Theorem~\ref{maprops}, this means that we have the following relation
between the depth~$3$ moulds~$\ma(D(a))$ and~$\ma(Q)$:
\[-(u_1+u_2+u_3)\ma(Q)=\ma(D(a)).\]
Thus~$\ma(D(a))$ is a polynomial divisible~by the factor~$(u_1+u_2+u_3)$,
and since the mould~$\ma(D(a))$ is~$\push$-invariant~by Lemma~\ref{pushinv} and~\eqref{th:maprops:2} of~Theorem~\ref{maprops}, it is also divisible~by~$u_1$,~$u_2$ and~$u_3$.
Thus in fact~$\ma(D(a))$ is divisible~by~$\De_3$, so~$\Psi(D)=\Da\bigl(D(a)\bigr)$ is a polynomial-valued mould in 
$\mathcal{U}$ concentrated in depth~$3$.  Now, we saw that~$\mathcal{U}$ is contained in~$\ARI^{\pol}_{\underline{\al}/\underline{\al}}$,~i.e. every mould in~$\mathcal{U}$ is bialternal.  In particular, the depth~$3$ polynomial 
mould~$\Psi(D)\in\mathcal{U}$ is bialternal.  
By~Theorem~\ref{mouldGonch}, we can write it as
\[\Psi(D)(u_1,u_2,u_3)
=\sum_{r,s,t\ge 1} c_{rst}\,\ari\bigl(U_{2r},\ari(U_{2s},U_{2t})\bigr).\] 
Then since~$\Psi(\eps_{2i})=U_{2i-2}$ and~$\Psi:\E\rightarrow\mathcal{U}$ 
is a~Lie algebra isomorphism~by Corollary~\ref{Psi}, if we set
\[H=\sum_{r,s,t\ge 1} c_{rst}\,\bigl[\eps_{2r+2},[\eps_{2s+2},\eps_{2t+2}]\bigr]\]
we must have~$\Psi(D)=\Psi(H)$, so~by the injectivity of~$\Psi$,
we have~$D=H$, proving the theorem.
\end{proof}

\appendix\numberwithin{equation}{section}
\section{Proof of~Theorem~\ref{these}}\label{app:A}

This~appendix is devoted~to the proof of~Theorem~\ref{these}, which makes
essential use of the~$\swap$ operator and some of the basic notions of
Ecalle's theory of moulds, in particular the~$\ari$-bracket.

\begin{defis}
If we break the tuple~$w=(u_1,\ldots,u_r)$
into three parts 
\[a=(u_1,...,u_k),\ \ b=(u_{k+1},...,u_{\ell}),
\ \ c=(u_{\ell+1},...,u_r),\]
we write
\[\begin{cases}
a\lceil c=(u_1,\ldots,u_k,u_{k+1}+\cdots+u_{\ell}+u_{\ell+1},u_{\ell+2},\ldots,u_r)&
\textnormal{if }c\ne\varnothing
\\
a\rceil c=(u_1,\ldots,u_k+u_{k+1}+\cdots+u_{\ell},u_{\ell+1},\ldots,u_r)
&\textnormal{if }a\ne\varnothing
\end{cases}\]
If~$A,B\in \ARI$, we define the operator~$\arit(B)$ acting on~$A$~by 
\[\bigl(\arit(B)\cdot A\bigr)(u_1,\ldots,u_r)=\sum_{0\le k<\ell<r} A(a\lceil c)B(b)
-\sum_{1\le k<\ell\le r} A(a\rceil c)B(b).\]
If~$A,B\in\swapbar{\ARI}$ and the tuple~$(v_1,\ldots,v_r)$ breaks into pieces
\[a=(v_1,...,v_k),\ \ b=(v_{k+1},...,v_{\ell}), \ \ c=(v_{\ell+1},...,v_r),\]
then setting
\[\begin{cases}b\rfloor =(u_{k+1}-u_{\ell+1},u_{k+2}-u_{\ell+1},\ldots,u_{\ell}-u_{\ell+1})
&\textnormal{if }c\ne\varnothing,{\rm \ otherwise\ } b\rfloor =b\\
\lfloor b=(u_{k+1}-u_k,u_{k+2}-u_k,\ldots,u_{\ell}-u_k)
&\textnormal{if }a\ne\varnothing,{\rm \ otherwise\ } \lfloor b =b,
\end{cases}\]
we let
\[\bigl(\arit(B)\cdot A\bigr)(v_1,\ldots,v_r)=\sum_{0\le k<\ell<r} 
A(ac)B(b\rfloor) -\sum_{1\le k<\ell\le r} A(ac)B(\lfloor b).\]

\medskip
For two moulds~$A$ and~$B$, let~$\muu(A,B)$ be the product defined~by
\[\muu(A,B)(u_1,\ldots,u_r)=\sum_{i=0}^r A(u_1,\ldots,u_i)B(u_{i+1},\ldots,u_r).\]
When~$A=\ma(F)$ and~$B=\ma(G)$ for
polynomials~$F,G$ in~$a,b$, the multiplication~$\muu$ coincides with 
ordinary multiplication of polynomials:~$\muu(\ma(F),\ma(G))=\ma(FG)$~(cf.~\cite[(3.2.13)]{Sonline}). 
Let~$\lu(A,B)=\muu(A,B)-\muu(B,A)$, so~$\lu$ is the corresponding~Lie bracket.
For~$A,B\in \ARI$ or~$\swapbar{\ARI}$, we set 
\begin{equation}\label{eq:A1}
\ari(A,B)=\arit(B)\cdot A-\arit(A)\cdot B+\lu(A,B)\text.
\end{equation}

\medskip
Recall that~$\De$ is the mould in the~$u_i$'s defined~by
\[\De(u_1,\ldots,u_r)=u_1\cdots u_r(u_1+\cdots+u_r).\]
Let~$\De_v=\swap(\De)$, explicitly
\[\De_v(v_1,\ldots,v_r)=v_1(v_1-v_2)\cdots (v_{r-1}-v_r)v_r.\] 
For any mould~$A\in \swapbar{\ARI}$, let~$\Check{A}$ be the mould defined~by
\[\Check{A}(v_1,\ldots,v_r)=\De_v(v_1,\ldots,v_r)A(v_1,\ldots,v_r).\]
Let~$\swapbar{\ARI}^{\sing}_{\al}$ denote
the space of alternal rational-valued moulds~$A$ in the variables 
$v_i$ such that~$\Check{A}$ is polynomial-valued.  
\end{defis}

The heart of the proof of~Theorem~\ref{these}~\hbox{\cite[lemme~4.40]{Baumard}} consists in the following 
proposition.

\begin{prop}\label{prop:a1}
 The space
$\swapbar{\ARI}^{\sing}_{\al}$ is a~Lie algebra under the~$\ari$-bracket.
\end{prop}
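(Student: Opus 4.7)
The plan is to check two separate things: first, that $\ari(A,B)$ is alternal whenever $A$ and $B$ are, and second, that multiplication by $\De_v$ takes $\ari(A,B)$ back into the space of polynomial-valued moulds. The first fact is a classical theorem of Ecalle in mould theory, valid already on the full space of rational moulds, so there is nothing new to prove there; it can be quoted from \cite{EcalleFlex} or \cite{Sonline}. The real content of the proposition is the preservation of the denominator structure, and this is where the assumption that $\check A$ and $\check B$ are polynomial must be used.

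To control the denominators, I would substitute $A = \check A / \De_v$ and $B = \check B / \De_v$ into the defining expression $\ari(A,B) = \arit(B) \cdot A - \arit(A) \cdot B + \lu(A,B)$, using the explicit $v$-variable formulas given above. Each summand is then a product $A(X) B(Y)$ in which $X$ and $Y$ are tuples built from the three pieces $a$, $b$, $c$ of $(v_1,\ldots,v_r)$ (shifted by the $\cdot\rfloor$ and $\lfloor\cdot$ operations in the $\arit$ terms, or simply split for $\lu$). After multiplying by $\De_v(v_1,\ldots,v_r)$, each summand can be rewritten as $R(v)\,\check A(X)\,\check B(Y)$, where $R(v) = \De_v(v_1,\ldots,v_r)/(\De_v(X)\,\De_v(Y))$. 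The telescoping product structure of $\De_v$ makes each such $R(v)$ an explicit rational function whose denominator is a product of linear factors of the form $v_i - v_j$ (plus single factors $v_i$ in the boundary cases where $a$ or $c$ is empty).

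The remaining task, and the bulk of the work, is to prove that the sum of all these rational contributions is polynomial in the $v_i$. Concretely, one goes through each potentially spurious pole and verifies that the corresponding residue vanishes. The mechanism of cancellation is the alternality of $A$ and $B$: for each shuffle relation among the arguments of $\check A$ or of $\check B$, one obtains a matching cancellation of precisely the singular pieces. The three contributions $\arit(B) \cdot A$, $-\arit(A) \cdot B$ and $\lu(A,B)$ have to be regrouped so that boundary configurations (for instance $k=0$, $\ell=r$, or $c=\varnothing$) combine with the corresponding $\lu$ terms, after which alternality makes each residue drop out identically.

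The main obstacle is the combinatorial bookkeeping of these pole cancellations. Several families of rational factors appear, each indexed by a configuration of the split points $(k,\ell)$ or by the splitting index $i$ in the $\lu$ part, and grouping them in a useful way requires a careful analysis of which arguments of $\check A$ and $\check B$ coincide or differ. Once the grouping is fixed, invoking alternality to rewrite sums of $\check A$- or $\check B$-values along shuffles is formally routine but requires care. The end result is that $\De_v \cdot \ari(A,B)$ is polynomial-valued, which together with alternality gives $\ari(A,B) \in \swapbar{\ARI}^{\sing}_{\al}$ as desired.
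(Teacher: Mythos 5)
Your overall strategy matches the paper's: reduce to the two denominator-preservation claims, write each summand of $\ari(A,B)$ in terms of $\Check A$ and $\Check B$, inspect the spurious poles one by one, and use alternality to cancel them. What you have, though, is a plan rather than a proof: the places where you write ``the mechanism of cancellation is the alternality'' and ``invoking alternality to rewrite sums along shuffles is formally routine'' are precisely where the substantive work lives, and you never exhibit the actual mechanism. The paper isolates a specific consequence of alternality that does all the cancellation, namely that for $M\in\swapbar{\ARI}^{\sing}_{\al}$ the mould $\Check M$ satisfies
\[\Check M(0,v_2,\ldots,v_r)=\Check M(v_2,\ldots,v_r,0),\]
which is obtained by multiplying the simplest shuffle relation $\sum_i M(v_2,\ldots,v_i,v_1,v_{i+1},\ldots,v_r)=0$ by $v_1$ and setting $v_1=0$. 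Without an explicit identity of this kind in hand, there is no way to verify that the residues at the poles $v_i=v_{j}$ actually vanish; that identity is the missing idea, not mere bookkeeping.

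There is also a structural subtlety your outline elides. Not every pole of $\De_v\,\arit(M)\cdot N$ cancels inside that single term. The paper's computation shows that after the alternality-driven cancellations, $\De_v\,\arit(M)\cdot N$ is still rational with surviving poles at $v_{r+1}=0$ and $v_s=0$; those only disappear once one assembles the full $\ari$-bracket $\arit(N)\cdot M - \arit(M)\cdot N + \lu(M,N)$ and uses the antisymmetry in $M,N$ together with the reversal symmetry $M(u_1,\ldots,u_r)=(-1)^{r-1}M(u_r,\ldots,u_1)$ enjoyed by alternal moulds. You gesture at this with ``boundary configurations have to be combined with the corresponding $\lu$ terms,'' which is the right idea, but you give no indication of which poles survive where, nor of the symmetry argument that closes the case. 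To turn your sketch into a proof you need to (i) state and prove the boundary identity above as a standalone lemma, (ii) carry out the explicit depth-homogeneous expansion of $\arit(M)\cdot N$ so that the three families of poles can be identified, and (iii) track which cancellations happen term-by-term via the lemma and which require the global $M\leftrightarrow N$ antisymmetry of $\ari$.
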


We first need a lemma.

\begin{lem}\label{lem:a3}
 Let~$M$ be an element of~$\swapbar{\ARI}^{\sing}_{\al}$. Then the mould~$\Check M$ 
satisfies the relation
\[\Check{M}(0,v_2,\ldots,v_{r}) = \Check{M}(v_2,\ldots,v_r,0)\text.\]
\end{lem}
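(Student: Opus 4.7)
The plan is to express each of the two polynomials $\Check M(0, v_2, \ldots, v_r)$ and $\Check M(v_2, \ldots, v_r, 0)$ as a residue of $M$ multiplied by a common nonzero prefactor, and then to deduce the required equality of residues from the alternality of $M$.

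From the explicit form $\De_v(v_1, \ldots, v_r) = v_1 v_r \prod_{i=1}^{r-1}(v_i - v_{i+1})$, taking the limit $v_1 \to 0$ in $\Check M = \De_v \cdot M$ gives
\[\Check M(0, v_2, \ldots, v_r) = -\,v_2 v_r \prod_{j=2}^{r-1}(v_j - v_{j+1})\cdot R(v_2, \ldots, v_r),\]
where $R(v_2, \ldots, v_r) = \lim_{v_1 \to 0} v_1 M(v_1, v_2, \ldots, v_r)$ is the residue of $M$ along $v_1 = 0$. The analogous computation with the last variable tending to $0$ yields
\[\Check M(v_2, \ldots, v_r, 0) = v_2 v_r \prod_{j=2}^{r-1}(v_j - v_{j+1})\cdot S(v_2, \ldots, v_r),\]
with $S(v_2, \ldots, v_r) = \lim_{w \to 0} w M(v_2, \ldots, v_r, w)$. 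The lemma is therefore equivalent to the identity $R + S = 0$.

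To prove this identity, I would apply the alternality of $M$ to the shuffle of $(v_1)$ with $(v_2, \ldots, v_r)$, which reads
\[\sum_{i=0}^{r-1} M(v_2, \ldots, v_{i+1}, v_1, v_{i+2}, \ldots, v_r) = 0,\]
then multiply through by $v_1$ and send $v_1 \to 0$. Only the two extremal summands $i = 0$ and $i = r-1$ survive this limit: they contribute $R$ and $S$ respectively, giving the desired relation $R + S = 0$.

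The only step requiring care is the pole analysis underlying the last sentence. One needs to know that $M$ has at most simple poles, all located on the divisors $\{v_1 = 0\}$, $\{v_r = 0\}$, $\{v_i = v_{i+1}\}$; this follows from polynomiality of $\Check M$ together with the fact that $\De_v$ is squarefree. Granted that, for each interior index $1 \le i \le r-2$ the pullback of the polar locus of $M$ under the permutation placing $v_1$ in position $i+1$ does not contain the hyperplane $v_1 = 0$ (the only poles involving $v_1$ in that pullback are $v_1 = v_{i+1}$ and $v_1 = v_{i+2}$), so the term is regular at $v_1 = 0$ and drops out of the limit; for $i=0$ and $i = r-1$, the pullback of $\{w_1 = 0\}$, respectively $\{w_r = 0\}$, does contain $\{v_1 = 0\}$, and yields the two residues $R$ and $S$.
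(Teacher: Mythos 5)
Your proposal is correct and follows essentially the same route as the paper: both write down the alternality relation for the shuffle of $(v_1)$ into $(v_2,\ldots,v_r)$, multiply by $v_1$, send $v_1\to 0$, and observe that only the two extremal insertions survive because $\De_v$ carries a $v_1$ factor exactly when $v_1$ sits at an end of the word. Your residue notation $R,S$ and the reduction to $R+S=0$ is a mild repackaging of the paper's step of rewriting $M=\Check M/\De_v$ before taking the limit; the pole analysis you spell out is precisely what makes the paper's two endpoint fractions nonzero and its interior fractions regular at $v_1=0$.
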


\begin{proof}[Proof of Lemma~\ref{lem:a3}]
Let us write the simplest of the alternality relations, which gives

\begin{align*}
 0 &= \sum_{w\in\sham(v_1,v_2\cdots v_r)}M(w) \\
&= \sum_{i=0}^{r}M(v_2,\ldots,v_i,v_1,v_{i+1},\ldots,v_r) \\
&= \sum_{i=1}^{r-1}\frac{\Check{M}(v_2,\ldots,v_i,v_1,v_{i+1},\ldots,v_r)}{v_2\,(v_2-\cdots)\cdots(\cdots-v_r)\,v_r} 
+\nline+
\frac{\Check{M}(v_1,\ldots,v_r)}{v_1\,(v_1-v_2)\cdots (v_{r-1}-v_r)\,v_r }
+\nline+
\frac{\Check{M}(v_2,\ldots,v_{r},v_1)}{v_2\,(v_2-v_3)\cdots (v_{r-1}-v_{r})(v_r-v_1)\,v_1}.\\
\intertext{Hence, multiplying~by~$v_1$ and evaluating at~$v_1=0$,}
 0 &= \frac{\Check{M}(0,v_2,\ldots,v_r)}{(-v_2)(v_2-v_3)\cdots (v_{r-1}-v_r)\,v_r }
+
\frac{\Check{M}(v_2,\ldots,v_{r},0)}{v_2\,(v_2-v_3)\cdots (v_{r-1}-v_{r})v_r},
\end{align*}
which is well-defined since both numerators at play are polynomials~by the hypothesis on~$M$. The result follows immediately.\end{proof}

\begin{proof}[Proof of Proposition~\ref{prop:a1}]
Let~$M,N\in \swapbar{\ARI}^{\sing}_{\al}$. By linearity, we may assume that~$M$ and~$N$
are respectively concentrated in depths~$r\geq 1$ and~$s\geq 1$.
Now, alternality is preserved~by the~$\ari$-bracket~(cf. \cite[Theorem 3.1]{SS} 
for a complete proof), so it only remains to show that~$\ari(M,N)$ is 
an element of~$\swapbar{\ARI}^{\sing}$.

The essential step of the proof consists in determining the poles of~\hbox{$\arit(M)\cdot N$} by reducing their values 
modulo the subspace of polynomials in the~$v_i$'s. We will use the simplifying
notation~$\De_v(v_1,\ldots,v_r)=\De_r(v)$. The fact that~$M$ is
concentrated in depth~$r$ and~$N$ in depth~$s$ also simplifies the defining
formula for~$\arit(M)\cdot N$, as follows:
\begin{align*}
\bigl(\arit(M)&\cdot N\bigr)(v_1\ldots v_{r+s})= \sum_{0\leq i< s}N(v_1\cdots\,v_i\, v_{i+r+1}\cdots\,v_{r+s})\,M(v_{i+1}\cdots 
v_{i+r}\rfloor)-\\
&\qquad\qquad\qquad\qquad\qquad \sum_{0< i\leq s}N(v_1\cdots\,v_i\, v_{i+r+1}\ldots v_{r+s})\,M(\lfloor v_{i+1}\cdots\,v_{i+r})\\
&= N(v_{r+1}\cdots\,v_{r+s})\,M(v_1-v_{r+1},\ldots,v_r-v_{r+1})\\ 
&\ \ \ +\sum_{i=1}^{s-1} N(v_1\cdots\,v_i\, v_{i+r+1}\cdots 
v_{r+s})\cdot\\
&\qquad\Bigl(M(v_{i+1}-v_{i+r+1},\ldots,v_{i+r}-v_{i+r+1}) - M(v_{i+1}-v_i,\ldots,v_{i+r}-v_i)\Bigr)\\
&\qquad\ \ - N(v_1\cdots\,v_s)\,M(v_{s+1}-v_s,\ldots v_{r+s}-v_s).
\end{align*}

\medskip
\noindent Consequently,
\begin{align}
&\De_{r+s}(v)\bigl(\arit(M)\cdot N\bigr)(v_1,\ldots ,v_{r+s})=\nonumber\\
&\ \ \  \tfrac{v_1}{v_{r+1}\,(v_1-v_{r+1})}\,\Check N(v_{r+1}\cdots 
v_{r+s})\,\Check M(v_1-v_{r+1},\ldots,v_r-v_{r+1}) 
+\sum_{i=1}^{s-1} \den{r+s}{v}\, S_i \nonumber\\
&\ \ \quad + \tfrac{v_{r+s}}{v_s\,(v_{r+s}-v_s)}\,\Check N(v_1\cdots\,v_s)\,\Check M(v_{s+1}-v_s,\ldots, 
v_{r+s}-v_s) 
\label{eq:A2}
\end{align}
with
\begin{align*}
 \den{r+s}{v}\, S_i &= \frac{(v_i-v_{i+1})\cdots (v_{i+r}-v_{i+r+1})}{v_i-v_{i+r+1}}\,\Check{N}(v_1\cdots\,v_i\, v_{i+r+1}\cdots\,v_{r+s})\cdot\\
&\ \ \ \Bigl(M(v_{i+1}-v_{i+r+1},\ldots,v_{i+r}-v_{i+r+1}) -
M(v_{i+1}-v_i,\ldots,v_{i+r}-v_i)\Bigr)\\
&= \frac{(v_i-v_{i+1})\cdots (v_{i+r}-v_{i+r+1})}{v_i-v_{i+r+1}}\,\Check{N}(v_1\cdots\,v_i\, v_{i+r+1}\cdots v_{r+s})\cdot\\
&\ \ \ \Biggl(\frac{\Check 
M(v_{i+1}-v_{i+r+1},\ldots,v_{i+r}-v_{i+r+1})}{(v_{i+1}-v_{i+r+1})(v_{i+1}-v_{i+2})\cdots 
(v_{i+r-1}-v_{i+r})(v_{i+r}-v_{i+r+1})} -\\
&\ \ \ \ \ \qquad \frac{\Check 
M(v_{i+1}-v_i,\ldots,v_{i+r}-v_i)}{(v_{i+1}-v_i)(v_{i+1}-v_{i+2})\cdots(v_{i+r-1}-v_{i+r})(v_{i+r}-v_i)}\Biggr),
\end{align*}
or, equivalently,
\begin{align} 
\den{r+s}{v}\, S_i&= \tfrac{1}{v_i-v_{i+r+1}}\,\Check{N}(v_1\cdots\,v_i\, v_{i+r+1}\cdots\,v_{r+s})\cdot\nonumber\\
&\ \ \ \ \ \Bigl( \tfrac{v_i-v_{i+1}}{v_{i+1}-v_{i+r+1}}\,\Check 
M(v_{i+1}-v_{i+r+1},\ldots,v_{i+r}-v_{i+r+1})\nonumber\\
&\ \quad \quad \qquad\quad+\tfrac{v_{i+r}-v_{i+r+1}}{v_{i+r}-v_i}\,\Check 
M(v_{i+1}-v_i,\ldots,v_{i+r}-v_i)\Bigr)\text.
\label{eq:A3}
\end{align}
In light of equalities~\eqref{eq:A2} and~\eqref{eq:A3}, there are three types of poles which 
can occur in the rational function~$\den{r+s}{v} \bigl(\arit(M)\cdot N\bigr)$:
\begin{samepage}\label{page:discussion}
\begin{enumerate}[1)]
 \item\label{cas1} the poles~$\frac1{v_{r+1}}$ and~$\frac{1}{v_{s}}$, which come from only one term and thus do not immediately cancel out;
  \item\label{cas3} the poles of the form~$\tfrac{1}{v_i-v_{i+r+1}}$;
\item\label{cas2} the poles of the form~$\tfrac{1}{v_i-v_{i+r}}$.
\end{enumerate}
\end{samepage}
Let us deal first with case~\ref{cas3}. The corresponding pole~$\tfrac{1}{v_i-v_{i+r+1}}$ can only appear in the term~$\den{r+s}{v}\,S_i$; we then want~to check that this pole is compensated~by the corresponding difference of the~$\Check M$'s. Let us compute this difference for~$v_i=v_{i+r+1}=u_i=u_{i+r+1}=a$; it gives
\[\tfrac{a-v_{i+1}}{v_{i+1}-a}\,\Check M(v_{i+1}-a,\ldots,v_{i+r}-a)+
\tfrac{v_{i+r}-a}{v_{i+r}-a}\,\Check 
M(v_{i+1}-a,\ldots,v_{i+r}-a)
\]
which is indeed zero, hence the compensation.

\medskip
For case~\ref{cas2}, we need~to distinguish three sub-cases, according~to whether~$i=1$, or~$i=s$, or~$i$ 
belongs~to~$\seg{2,s-1}$.
\begin{enumerate}[a)]
 \item~$i=1$: the pole is multiplied~by
\begin{align*}
 & \tfrac{v_1}{v_{r+1}}\,\Check N(v_{r+1}v_{r+2}\cdots\,v_{r+s})\,\Check M(v_1-v_{r+1},\ldots,v_r-v_{r+1}) 
-\nline-\tfrac{1}{v_1-v_{r+2}}\,\Check{N}(v_1\, v_{r+2}\cdots\,v_{r+s})\cdot (v_{r+1}-v_{r+2})\, \Check 
M(v_{2}-v_1,\ldots,v_{r+1}-v_1)
\end{align*}
which if~$v_1=v_{r+1}=a$ gives
\begin{align*}
 & \tfrac{a}{a}\,\Check N(a,v_{r+2},\ldots, v_{r+s})\,\Check M(a-a,v_2-a,\ldots,v_r-a) 
-\nline-\tfrac{1}{a-v_{r+2}}\,\Check{N}(a, v_{r+2},\cdots, v_{r+s})\cdot (a-v_{r+2})\, \Check M(v_{2}-a,\ldots,v_{r}-a,a-a) 
\\
&= \Check N(a,v_{r+2},\ldots, v_{r+s})\,\Check M(0,v_2-a,\ldots,v_r-a) -\nline-\Check{N}(a, v_{r+2},\cdots, v_{r+s})\,  
\Check M(v_{2}-a,\ldots,v_{r}-a,0)
\end{align*}
which is zero thanks~to Lemma~\ref{lem:a3}.
 \item~$i=s$: the pole is multiplied~by
\begin{align*}
 & -\tfrac{v_{r+s}}{v_s}\,\Check N(v_1\cdots\,v_s)\,\Check M(v_{s+1}-v_s,\ldots, v_{r+s}-v_s)+\nlineshort+
\tfrac{1}{v_{s-1}-v_{r+s}}\,\Check{N}(v_1,\ldots, v_{s-1}, v_{r+s})\cdot(v_{s-1}-v_{s})\, \Check 
M(v_{s}-v_{r+s},\ldots,v_{r+s-1}-v_{r+s})
\end{align*}
which if~$v_s=v_{r+s}=a$ gives
\begin{align*}
 & -\tfrac{a}{a}\,\Check N(v_1,\ldots,v_{s-1}, a)\,\Check M(v_{s+1}-a,\ldots, v_{r+s-1}-a,a-a)+\nline+
\tfrac{1}{v_{s-1}-a}\,\Check{N}(v_1,\ldots, v_{s-1}, a)\cdot(v_{s-1}-a)\, \Check M(a-a,v_{s+1}-a,\ldots,v_{r+s-1}-a) \\
&= -\Check N(v_1,\ldots,v_{s-1}, a)\,\Check M(v_{s+1}-a,\ldots, v_{r+s-1}-a,0)+\nline+
\Check{N}(v_1,\ldots, v_{s-1}, a)\Check M(0,v_{s+1}-a,\ldots,v_{r+s-1}-a) 
\end{align*} 
which is zero for the same reason as above.
 \item~$i\in\seg{2,s-1}$: the pole~$\tfrac{1}{v_i-v_{i+r}}$ comes from~$\den{r+s}{v} (S_{i-1}-S_i)$, and is multiplied~by
\begin{align*}
  &\tfrac{v_{i-1}-v_{i}}{v_{i-1}-v_{i+r}}\,\Check{N}(v_1,\ldots, v_{i-1}, v_{i+r},\cdots, v_{r+s})\cdot
\Check M(v_{i}-v_{i+r},\ldots,v_{i-1+r}-v_{i+r})-\nline- \tfrac{v_{i+r}-v_{i+r+1}}{v_i-v_{i+r+1}}\,\Check{N}(v_1,\ldots, 
v_i, v_{i+r+1},\ldots, v_{r+s})\cdot\Check M(v_{i+1}-v_i,\ldots,v_{i+r}-v_i)
\end{align*}
which if~$v_i=v_{i+r}=a$ gives
\begin{align*}
  &\Check{N}(v_1,\ldots, v_{i-1}, a,v_{i+r+1},\cdots, v_{r+s})\cdot
\Check M(a-a,v_{i+1}-a,\ldots,v_{i-1+r}-a)-\nline- \Check{N}(v_1,\ldots,v_{i-1}, a, v_{i+r+1},\ldots, v_{r+s})\cdot\Check 
M(v_{i+1}-a,\ldots,v_{i+r-1}-a,a-a) \\
&= \Check{N}(v_1,\ldots, v_{i-1}, a,v_{i+r+1},\cdots, v_{r+s})\cdot
\Check M(0,v_{i+1}-a,\ldots,v_{i-1+r}-a)-\nline- \Check{N}(v_1,\ldots,v_{i-1}, a, v_{i+r+1},\ldots, v_{r+s})\cdot\Check 
M(v_{i+1}-a,\ldots,v_{i+r-1}-a,0)
\end{align*}
which is zero again.
\end{enumerate}

\medskip
In consequence, the only remaining poles in~$\den{r+s}{v} \bigl(\arit(M)\cdot 
N\bigr)$ are those in~$\frac{1}{v_{r+1}}$ and~$\frac{1}{v_s}$ from case~\ref{cas1}, and more precisely we can write
\begin{align*}
\den{r+s}{v} &\bigl(\arit(M)\cdot N\bigr)(v_1,\ldots, v_{r+s})\equiv\\
&\tfrac{v_1}{v_{r+1}\,(v_1-v_{r+1})}\,\Check 
N(v_{r+1},\ldots, v_{r+s})\,\Check M(v_1-v_{r+1},\ldots,v_r-v_{r+1})\\ 
&+
\tfrac{v_{r+s}}{v_s\,(v_{r+s}-v_s)}\,\Check N(v_1,\ldots, v_s)\,\Check M(v_{s+1}-v_s,\ldots v_{r+s}-v_s) 
\pmod{\Q[\underline{v}]}\text.
 \end{align*}
It remains~to show that the above expression cancels in the bracket~$\ari(M,N)$,
using the definition by equation~\eqref{eq:A1}.  The possible poles for the rational function
$\den{r+s}{v}\ari(M,N)$ come from the sum
\begin{align*}
 &\tfrac{v_1}{v_{s+1}\,(v_1-v_{s+1})}\,\Check M(v_{s+1},\ldots, v_{s+r})\,\Check N(v_1-v_{s+1},\ldots,v_s-v_{s+1}) +\nline+
\tfrac{v_{s+r}}{v_r\,(v_{s+r}-v_r)}\,\Check M(v_1,\ldots, v_r)\,\Check N(v_{r+1}-v_r,\ldots, v_{s+r}-v_r) -\nline-
\tfrac{v_1}{v_{r+1}\,(v_1-v_{r+1})}\,\Check N(v_{r+1},\ldots, v_{r+s})\,\Check M(v_1-v_{r+1},\ldots,v_r-v_{r+1}) +\nline+
\tfrac{v_{r+s}}{v_s\,(v_{r+s}-v_s)}\,\Check N(v_1,\ldots, v_s)\,\Check M(v_{s+1}-v_s,\ldots v_{r+s}-v_s) +\nline+
\tfrac{v_r-v_{r+1}}{v_r\,v_{r+1}}\,\Check M(v_1,\ldots,v_r)\,\Check N(v_{r+1},\ldots,v_{r+s}) -\nline- 
\tfrac{v_s-v_{s+1}}{v_s\,v_{s+1}}\,\Check N(v_1,\ldots,v_s)\,\Check M(v_{s+1},\ldots,v_{r+s})
\end{align*}
paying attention~to the exchange of~$r$ and~$s$ when we switch~$M$ and~$N$. The discussion on page~\pageref{page:discussion} shows that it suffices~to check that the poles~$\frac{1}{v_{r}}$ and~$\frac{1}{v_s}$,~$\frac{1}{v_{r+1}}$ and~$\frac{1}{v_{s+1}}$ cancel out.
\par
Let us note that the alternality of~$\ari(M,N)$~together with the basic
fact that any alternal mould~$M$ satisfies~$M(u_1,\ldots,u_r)=(-1)^{r-1}
M(u_r,\ldots,u_1)$~\hbox{\cite[Lemma 2.5.3]{Sonline}} 
imply that it is enough~to deal with the case of~$\frac{1}{v_{r}}$ and~$\frac{1}{v_{s}}$, the other two being deduced from them~by applying the involution corresponding~to the symmetry around~$\tfrac{r+s+1}{2}$. And the pole~$\frac{1}{v_s}$ is deduced from~$\frac{1}{v_{r}}$ since the~total expression for~$\ari(M,N)$ is antisymmetric in~$M$ and~$N$, and thus in~$r$ and~$s$.

\medskip
The only case left~to check is that of the pole~$\frac{1}{v_{r}}$. The corresponding factor is
\begin{align*}
 & \tfrac{v_{s+r}}{v_{s+r}-v_r}\,\Check M(v_1,\ldots, v_r)\,\Check N(v_{r+1}-v_r,\ldots, v_{s+r}-v_r)+\nline+
\tfrac{v_r-v_{r+1}}{v_{r+1}}\,\Check M(v_1,\ldots,v_r)\,\Check N(v_{r+1},\ldots,v_{r+s})
\end{align*}
which is clearly zero when~$v_r=0$. This concludes the proof of Proposition~\ref{prop:a1}.
\end{proof}

We can now complete the proof of~Theorem~\ref{these}. We use the following
elementary result of mould theory~\cite[Lemma 2.4.1]{Sonline}, 
\begin{equation}\label{eq:A4}\ari\bigl(\swap(A),\swap(B)\bigr)=\swap\bigl(\ari(A,B)\bigr)\qquad
\forall\ A,B\in\ARI_{\push}\text.
\end{equation}

Let~$A,B\in \ARI^{\sing}_{\underline{\al}/\underline{\al}}$.  
Then~$C=\ari(A,B)\in \ARI_{\underline{\al}/\underline{\al}}$~by~\eqref{th:maprops:5} of~Theorem~\ref{maprops}.  Consider the mould~$\swap(C)\in \swapbar{\ARI}$.
Again~by~\eqref{th:maprops:5} of~Theorem~\ref{maprops},~$\ARI_{\underline{\al}/\underline{\al}}$ is contained in~$\ARI_{\push}$, so the moulds
$A,B$ and~$C$ are all~$\push$-invariant. Thus~\eqref{eq:A4} holds, i.e.
$\swap(C)=\ari(\swap(A),\swap(B))$.  

Both~$\swap(A)$ and~$\swap(B)$ lie in~$\swapbar{\ARI}^{\sing}_{\al}$~by definition.  
Thus~by Proposition~\ref{prop:a1}, we also have~$\swap(C)\in
\swapbar{\ARI}^{\sing}_{\al}$ and thus~$C\in \ARI^{\sing}$.  But we also have
$C=\ari(A,B)\in \ARI_{\underline{\al}/\underline{\al}}$, so 
$C\in \ARI^{\sing}_{\underline{\al}/\underline{\al}}$, which concludes
the proof of~Theorem~\ref{these}.\qed

\section{Proof of~Theorem~\ref{B3}}\label{app:B}

This section is devoted~to the proof of~Theorem~\ref{B3}, which 
is stated as Proposition~\ref{prop:B2} below.

We use the notation and terminology of section~\ref{sec3}, with one further definition:
for any polynomial~$F\in\Lie[a,b]$ of homogeneous depth~$r$, 
we define~$\da(F)$~to be the mould given~by
\[\da(F)(u_1,\ldots,u_r)=\frac{\ma(F)}{u_1\cdots u_r}.\]
As we did for~$\Da(F)$, if~$F=\sum_r F^r$ is any~Lie polynomial broken up into 
its depth-graded parts, we define~$\da(F)=\sum_r \da(F^r)$.

Let~$\lu(A,B)$ be as in the beginning of~Appendix~\ref{app:A}.
For any polynomial~$U$, let~$\Darit_U$ be the operator 
on moulds defined~by
\[\Darit_U\cdot A=-\arit\bigl(\Da(U)\bigr)\cdot A-\lu\bigl(A,
\Da(U)\bigr).\]

\begin{prop}\label{prop:B1}
Let~$U$ be a push-invariant polynomial
in~$\Lie[a,b]$, and let~$D_U$ be the associated derivation.
Then for any~Lie polynomial~$F$ in~$a$ and~$b$, we have
\begin{equation}-\da\bigl(D_U(F)\bigr)=\Darit_U\cdot \da(F).\label{eq:B1}\end{equation}
\end{prop}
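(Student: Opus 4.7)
The plan is to prove Proposition~\ref{prop:B1} by a direct mould-theoretic computation, matching the combinatorial expansion of $D_U(F)$ against the defining formulas for $\arit$ and $\lu$ term by term. By $\Q$-linearity of both sides in $F$, it suffices to treat a single Lie monomial, and the Lazard decomposition $\Lie[a,b]=\Q a\oplus\Lie[C_1,C_2,\ldots]$ lets me further assume $F\in\Lie[C_1,C_2,\ldots]$ (the case $F=a$ being trivial since $D_U(a)=0$). Since $D_U$ is a derivation of $\Lie[a,b]$ with $D_U(a)=0$, it extends uniquely to a derivation of the enveloping algebra $\Q\langle a,b\rangle$, and $\ma$ intertwines associative concatenation with the mould product $\muu$. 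Thus the identity can be checked at the level of words: for $w=a^{i_0}ba^{i_1}\cdots ba^{i_r}$ of depth $r$, the polynomial $D_U(w)$ is the sum, over the $r$ occurrences of $b$ in $w$, of the word in which that $b$ has been replaced by $[b,U]=bU-Ub$.

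The key observation is that, in mould language, each $b$-replacement corresponds to inserting the mould of $U$ at one of the $r$ positions of $\ma(w)$, in two variants: a right-inserted version coming from the $bU$-piece and a left-inserted version from the $-Ub$-piece. This is precisely the combinatorial content of the defining formula for $\arit(\Da(U))\cdot\da(F)$: the first sum $\sum_{0\le k<\ell<r+s}A(a\lceil c)B(b)$ captures the right-insertions (merging the inserted block with the variable immediately to its right) and the second sum $\sum_{1\le k<\ell\le r+s}A(a\rceil c)B(b)$ captures the left-insertions (merging with the variable to its left). The two extreme boundary insertions, where $U$ is placed at the very beginning or end of $w$, fall outside the range of $\arit$ and are picked up by the $\lu(\da(F),\Da(U))=\muu(\da(F),\Da(U))-\muu(\Da(U),\da(F))$ term. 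The outer sign $-1$ on the LHS together with the signs $-\arit-\lu$ in the definition of $\Darit_U$ are forced by the sign convention $\ma(C_{i_1}\cdots C_{i_r})=(-1)^{\sum(i_j-1)}u_1^{i_1-1}\cdots u_r^{i_r-1}$ and by Theorem~\ref{maprops}~\eqref{th:maprops:3}.

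The main technical obstacle is reconciling denominators: $\da(F)$ carries denominator $u_1\cdots u_r$ while $\Da(U)$ has the extra factor $1/(u_1+\cdots+u_s)$, so each term on the RHS acquires a spurious pole along $u_{k+1}+\cdots+u_\ell=0$ (the sum over the inserted block) with no visible counterpart on the LHS. This is where push-invariance of $U$ becomes essential: by \eqref{prop:LKV:i} of Proposition~\ref{LKV} it is equivalent to the existence of $Q\in\Lie[a,b]$ with $[U,b]+[a,Q]=0$, which via Theorem~\ref{maprops}~\eqref{th:maprops:3} translates into the mould identity $\ma([U,b])(u_1,\ldots,u_{s+1})=-(u_1+\cdots+u_{s+1})\,\ma(Q)(u_1,\ldots,u_{s+1})$, providing exactly the polynomial divisibility needed to cancel the spurious denominators contributed by the inserted blocks. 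Once this cancellation is in place, the remaining identity reduces to a combinatorial matching of coefficients between $\ma(D_U F)$, computed as the explicit sum over insertions of $[b,U]$ at each of the $r$ positions of $w$, and the sum over insertions produced by $\arit$ and $\lu$; this matching, while tedious, is essentially a routine verification.
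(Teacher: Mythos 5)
Your proposal has a fatal misidentification of the derivation $D_U$. You write ``Since $D_U$ is a derivation of $\Lie[a,b]$ with $D_U(a)=0$, \ldots for $w=a^{i_0}ba^{i_1}\cdots ba^{i_r}$\ldots the polynomial $D_U(w)$ is the sum over the $r$ occurrences of $b$ in $w$ of the word with that $b$ replaced by $[b,U]$.'' This is the derivation $D_P$ of Section~2.2, associated to the Poisson bracket; it is \emph{not} the derivation that Proposition~\ref{prop:B1} is about. The $D_U$ here is the element of $\Der^0\Lie[a,b]$ with $D_U(a)=U$ (and $D_U(b)$ the unique $Q$ with $D_U([a,b])=0$) — this is forced by the surrounding construction, since $\Psi(D)=\Da(D(a))$ and Proposition~\ref{prop:B2} reads $D_U\mapsto\Da(U)=\Da(D_U(a))$, and it is also visible in the base case of the paper's own proof, where $D_U(C_n)$ is computed as $\sum_i\ad(a)^i\ad(U)\ad(a)^{n-2-i}(b)$, i.e.\ by inserting $U$ at each occurrence of $a$, not at each occurrence of $b$. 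Under your reading, $\Da(U)=\Da(D_U(a))=\Da(0)=0$, the formula~\eqref{eq:B1} trivialises, and Corollary~\ref{Psi} (which uses $\Psi(\eps_{2i})=U_{2i-2}$) collapses. So your combinatorial dictionary between $b$-substitutions and $\arit$-insertions starts from the wrong derivation; the actual combinatorics concerns $a$-substitutions, in which the replacement polynomial $U$ itself contributes all $r$ new $u$-variables, and the interaction with the commutative $\ma$-picture is genuinely different from what you describe.

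Two secondary points. First, your claimed role for push-invariance is not what the proof needs: the paper's proof of Proposition~\ref{prop:B1} never invokes push-invariance. The hypothesis is there only to ensure that $D_U$ (i.e.\ its value $D_U(b)$) exists at all via Proposition~\ref{LKV}~\eqref{prop:LKV:i}; after that, both sides of~\eqref{eq:B1} are already rational moulds with denominators allowed by the definitions of $\da$ and $\Da$, and the identity is checked directly. Your picture of ``spurious poles cancelled by push-invariance'' does not match the actual mechanism. Second, your closing phrase ``this matching, while tedious, is essentially a routine verification'' conceals the part of the argument that actually takes work. The paper does not do a global word-level matching: it proves the base case $\deg_b F=1$ by an explicit four-line computation (equations~\eqref{eq:B2}--\eqref{eq:B5}), showing that the sum over $a$-insertions telescopes into the closed-form rational expression produced by $\arit$ and $\lu$, and then performs an induction on the depth of $F$ using that $\Darit_U$ is a derivation for the $\lu$-bracket. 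Your outline contains neither the inductive structure nor a concrete plan for the base-case cancellation, and the step you call routine is exactly where both the derivation misidentification and the denominator bookkeeping would cause the argument to fail.
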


\begin{proof} By additivity, we may assume that~$U$ is of homogeneous
depth~$r$ and~$F$ is of homogeneous depth~$s$.  We will use induction on the
depth of~$F$~to
prove the proposition; the only difficult part is the base case~$s=1$.

\medskip
So assume that~$F$ is of depth~$1$, i.e.~$F=C_n=\ad(a)^{n-1}(b)$.
Here, since~$D_U([a,b])$ is zero, we have the explicit
formula
\[D_U(F)=\sum_{i=0}^{n-2} \ad(a)^i\ad(U)\ad(a)^{n-i-2}(b)
=\sum_{i=0}^{n-2} \ad(a)^i(UC_{n-i-1}-C_{n-i-1}U).\]
Applying~$\ma$~to both sides of this formula, and using the facts that
$\ma(C_{n-i-1})$ is equal to~$(-1)^{n-i-2}u_1^{n-i-2}$ and that
for a~Lie polynomial~$P$ of homogeneous depth~$r$ we have
\[\ma([a,P])=-(u_1+\cdots+u_r)\ma(P)\]
by~\eqref{th:maprops:3} of~Theorem~\ref{maprops},
and using the identity
\begin{align*}\ma(UC_{n-i-1}-C_{n-i-1}U)&=
(-1)^{n-i-2}\ma(U)(u_1,\ldots,u_r)u_{r+1}^{n-i-2}\\
&\ \ \ \ -(-1)^{n-i-2}u_1^{n-i-2}\ma(U)(u_2,\ldots,u_{r+1}),\end{align*}
we find that
\begin{align*}\ma\bigl(D_U(F)\bigr)&=
\sum_{i=0}^{n-3} (-1)^{n-1}(u_1+\cdots+u_{r+1})^i \cdot\\
&\ \ \ \Bigl( \ma(U)(u_1,\ldots,u_r)u_{r+1}^{n-i-2}-u_1^{n-i-2}\ma(U)(u_2,\ldots,u_{r+1})\Bigr),
\end{align*}
so, with a little rewriting of indices, we find that the left-hand side of~\eqref{eq:B1} equals
\begin{align*}-\da\bigl(D_U(F)\bigr)&=(-1)^{n-1}{\frac{\ma(U)(u_2,\ldots,u_{r+1})}{\De_{r+1}}}\sum_{i=1}^{n-2} (u_1+\cdots+u_{r+1})^iu_1^{n-i-1}\\
&\ \ \ \ -(-1)^{n-1}{\frac{\ma(U)(u_1,\ldots,u_r)}{\De_{r+1}}}\sum_{i=1}^{n-2}
(u_1+\cdots+u_{r+1})^iu_{r+1}^{n-i-1},
\end{align*}
or, adding up the sums~to obtain a closed expression,
\begin{gather}-\da\bigl(D_U(F)\bigr)=(-1)^{n-1}{\frac{\ma(U)(u_2,\ldots,u_{r+1})}{\De_{r+1}}}
\Bigl({\tfrac{(u_1+\cdots+u_{r+1})^{n-1}u_1-u_1^{n-1}(u_1+\cdots+u_{r+1})}{(u_2+\cdots+u_{r+1})}}\Bigr)\nonumber\\
\ \ \ \ \ -(-1)^{n-1}{\frac{\ma(U)(u_1,\ldots,u_r)}{\De_{r+1}}}
\Bigl({\tfrac{(u_1+\cdots+u_{r+1})^{n-1}u_{r+1}-u_{r+1}^{n-1}(u_1+\cdots+u_{r+1})}{(u_1+\cdots+u_r)}}
\Bigr) \label{eq:B2}\end{gather}

Let us now compute the right-hand side of~\eqref{eq:B1}. We
have~$\ma(F)=(-1)^nu_1^{n-1}$, so~$\da(F)=(-1)^nu_1^{n-2}$; thus
\begin{equation}\label{eq:B3}\lu\bigl(\da(F),\Da(U)\bigr)=
(-1)^n\bigl({\tfrac{u_1^{n-2}\ma(U)(u_2,\ldots,u_{r+1})}{u_2\cdots u_{r+1}(u_2+\cdots+
u_{r+1})}}\bigr) -(-1)^n \bigl(\tfrac{\ma(U)(u_1,\ldots,u_r)u_{r+1}^{n-2}}{u_1\cdots u_r(u_1+
\cdots+u_r)}\bigr)\text.
\end{equation}
Also, we have
\begin{align}\arit&\bigl(\Da(U)\bigr)\cdot \da(F)=
\!\!\!\!\!\sum_{w=abc, c\ne\varnothing} \da(F)(a\lceil c)\Da(U)(b)-
\!\!\!\!\!\sum_{w=abc, a\ne\varnothing} \da(F)(a\rceil c)\Da(U)(b)\nonumber\\
&=(-1)^n(u_1+\cdots+u_{r+1})^{n-2}\Da(U)(u_1,\ldots,u_r)-{}\nonumber\\&\qquad{}
-(-1)^n(u_1+\cdots+u_{r+1})^{n-2}\Da(U)(u_2,\ldots,u_{r+1})\nonumber\\
&=(-1)^n(u_1+\cdots+u_{r+1})^{n-2}\Bigl({\tfrac{\ma(U)(u_1,\ldots,u_r)}{u_1\cdots u_r
(u_1+\cdots+u_r)}}
-{\tfrac{\ma(U)(u_2,\ldots,u_{r+1})}{u_2\cdots u_{r+1}
(u_2+\cdots+u_{r+1})}}\Bigr)\text,\label{eq:B4}
\end{align}
since~$\da(F)$ is a depth~$1$ mould and therefore the only possible
decomposition~$w=abc$ with~$c\ne\varnothing$ is~$a=\varnothing$,~$b=(u_1,\ldots,u_r)$
and~$c=u_{r+1}$, and the only possible decomposition with~$a\ne\varnothing$
is~$a=u_1$,~$b=(u_2,\ldots,u_r)$ and~$c=\varnothing$.
We add~\eqref{eq:B3} and~\eqref{eq:B4}~to get the right-hand side of~\eqref{eq:B1}, obtaining
\begin{gather}\Darit_U\cdot\da(F)=(-1)^n{\frac{\ma(U)(u_1,\ldots,u_r)}{u_1\cdots u_r (u_1+\cdots+u_r)}}
\Bigl((u_1+\cdots+u_{r+1})^{n-2}-u_{r+1}^{n-2}\Bigr)
\nonumber\\
\ \ \ \ \ +(-1)^n{\frac{\ma(U)(u_2,\ldots,u_{r+1})}{u_2\cdots u_{r+1} (u_2+\cdots+u_{r+1})}}
\Bigl(u_1^{n-2}-(u_1+\cdots+u_{r+1})^{n-2}\Bigr)
\nonumber\\
=(-1)^n{\frac{\ma(U)(u_1,\ldots,u_r)}{\De_{r+1}}}
\Bigl({{(u_1+\cdots+u_{r+1})^{n-1}u_{r+1}-u_{r+1}^{n-1}}\tfrac{(u_1+\cdots+u_{r+1})}{(u_1+\cdots+u_r)}}\Bigr)
\nonumber\\
\qquad -(-1)^n{\frac{\ma(U)(u_2,\ldots,u_{r+1})}{\De_{r+1}}}
\Bigl({\tfrac{u_1(u_1+\cdots+u_{r+1})^{n-1}-u_1^{n-1}(u_1+\cdots+u_{r+1})}
{(u_2+\cdots+u_{r+1})}}\Bigr)\text,\label{eq:B5}
\end{gather}
which is equal~to~\eqref{eq:B2}.
This settles the base case where~$F$ is of depth~$1$.

Now assume 
that~\eqref{eq:B1} holds up~to depth~$s-1$, and let~$F$ be a~Lie polynomial of
depth~$s$.  Then~$F$ is a linear combination of~Lie brackets, so~by additivity,
we may assume that~$F$ is a single~Lie bracket; thus~$F=[G,H]$ for~Lie
brackets~$G,H$ that are of depth~$<s$.
We saw in~Appendix~\ref{app:A} that~$\ma(FG)=\muu(\ma(F),\ma(G))$,
so~by definition of~$\da$ we also have~$\muu(\da(F),\da(G))=\da(FG)$.
Furthermore
$\Darit_U$ is a derivation for the~$\lu$-bracket since both
$\arit(B)$ and~$\lu(\text{\vartextvisiblespace[.75em]},B)$ are~--- this is obvious for~$\lu$ but
difficult for~$\arit$, cf. \cite[Prop.~2.2.1]{Sonline}~---,
so using the induction hypothesis for~$G$ and~$H$, we have 
\begin{align*}-\da\bigl(D_U(F)\bigr)&=
-\da([D_U(G),H]+[G,D_U(H)])\\
&=-\lu\bigl(\da(D_U(G)),\da(H)\bigr)-\lu\bigl(\da(G),\da(D_U(H))\bigr)\\
&=\lu\bigl(\Darit_U(\da(G))),\da(H)\bigr)+\lu\bigl(\da(G),\Darit_U(\da(H)))\bigr)\\
&=\Darit_U\cdot \lu\bigl(\da(G)),\da(H)\bigr)\\
&=\Darit_U\cdot \da([G,H])\\
&=\Darit_U\cdot \da(F).
\end{align*}
This proves the proposition.
\end{proof}

\begin{prop}\label{prop:B2}
The map
$\Der^0\Lie[a,b]\rightarrow \ARI$ given~by
$D_U\mapsto \Da(U)$ is a~Lie algebra morphism, i.e.
\begin{equation}\label{eq:B6}
[D_U,D_V]\mapsto \ari\bigl(\Da(U),\Da(V)\bigr)\text.
\end{equation}
\end{prop}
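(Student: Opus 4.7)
The plan is to reduce the statement to Proposition~\ref{prop:B1} and then massage the resulting identity on $\da$ into an identity on $\Da$, using the explicit relationship $\Da(F) = \da(F)/(u_1+\cdots+u_r)$ for $F$ of depth $r$. Note first that since $D_U, D_V \in \Der^0\Lie[a,b]$, their values $U = D_U(a)$ and $V = D_V(a)$ are both push-invariant (by Proposition~\ref{LKV}), so Proposition~\ref{prop:B1} applies to both. The commutator acts on $a$ by
\[[D_U, D_V](a) = D_U(V) - D_V(U),\]
so by Proposition~\ref{prop:B1} (applied in both directions),
\[\da\bigl(D_U(V) - D_V(U)\bigr) = -\Darit_U\cdot\da(V) + \Darit_V\cdot\da(U) = \arit\bigl(\Da(U)\bigr)\cdot\da(V) - \arit\bigl(\Da(V)\bigr)\cdot\da(U) + \lu\bigl(\da(V),\Da(U)\bigr) - \lu\bigl(\da(U),\Da(V)\bigr).\]

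Next, I pass from $\da$ to $\Da$ by dividing by $E := u_1 + \cdots + u_{r+s}$, where $r$ and $s$ are the depths of $U$ and $V$ respectively. The key observation is that the tuple operations $a\lceil c$ and $a\rceil c$ that appear in the definition of $\arit$ preserve the total sum of entries: the sum of entries of $a\lceil c$ (resp.\ $a\rceil c$) equals $E$. Consequently, whenever $\da(V)$ is evaluated at $a\lceil c$ or $a\rceil c$ inside $\arit(\Da(U))\cdot\da(V)$, we have $\da(V)(a\lceil c) = E\cdot\Da(V)(a\lceil c)$ and similarly for $a\rceil c$. It follows that
\[\arit\bigl(\Da(U)\bigr)\cdot\da(V) = E\cdot\arit\bigl(\Da(U)\bigr)\cdot\Da(V),\]
and likewise for the other $\arit$ term. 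This converts the $\arit$ contributions to the correct form for the $\ari$-bracket after dividing by $E$.

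The delicate part is handling the two $\lu$ terms after division by $E$. Since $\da(V)$ is concentrated in depth $s$ and $\Da(U)$ in depth $r$, each $\muu$ in $\lu(\da(V),\Da(U))$ collapses to a single term, evaluated at a consecutive subtuple of the $u_i$'s. Writing $\da(V)(u_1,\ldots,u_s) = (u_1+\cdots+u_s)\,\Da(V)(u_1,\ldots,u_s)$ and similarly for the other evaluations, and using $E = (u_1+\cdots+u_s)+(u_{s+1}+\cdots+u_{r+s})$ (and its analogue for the other splitting), a direct algebraic manipulation shows that
\[\frac{\lu\bigl(\da(V),\Da(U)\bigr) - \lu\bigl(\da(U),\Da(V)\bigr)}{E}\]
combines with the $\arit$ parts to give exactly $\ari(\Da(U),\Da(V))$, in accordance with formula~\eqref{eq:A1}.

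The main obstacle is this last step: the two $\lu$ terms produce ``correction'' contributions once the factors $(u_1+\cdots+u_s)$ and $(u_{s+1}+\cdots+u_{r+s})$ from the conversion $\da \to \Da$ are expressed as $E$ minus the complementary partial sum. One must check that these correction terms cancel against the discrepancy between $\arit(B)\cdot A$ and $\arit(A)\cdot B$ that appears after division by $E$, so that the final expression is literally $\ari(\Da(U),\Da(V))$. Once this verification is complete, injectivity follows from the remark in the text: $D(a)$ can be recovered from $\Da(D(a))$ via multiplication by $\De_r$ and inversion of $\ma$, and $D \in \Der^0\Lie[a,b]$ is determined by $D(a)$ thanks to Proposition~\ref{LKV}~\eqref{prop:LKV:i}.
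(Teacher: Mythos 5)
Your proposal is correct in substance but takes a genuinely different route from the paper. The paper's proof never specialises to $a$: it applies $[D_U,D_V]$ to an \emph{arbitrary} $F\in\Lie[a,b]$, iterates Proposition~\ref{prop:B1} twice, uses the operator identity~\eqref{eq:B7} and the Jacobi identity to recognise the result as $\Darit_W\cdot\da(F)$ with $\Da(W)=\ari(\Da(U),\Da(V))$, and then compares with $\Darit_P\cdot\da(F)$ for $P=D_U(V)-D_V(U)$ to conclude $W=P$. You instead evaluate $\Psi([D_U,D_V])=\Da(D_U(V)-D_V(U))$ directly, apply Proposition~\ref{prop:B1} only with $F=V$ and $F=U$, and then exploit a structural observation to pass from $\da$ to $\Da$: the contracted tuples $a\lceil c$ and $a\rceil c$ in the $\arit$ formula have the same total coordinate sum $E=u_1+\cdots+u_{r+s}$ as the ambient tuple. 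This is the real insight in your argument, and it is correct; it buys a shorter proof that avoids both the commutator identity~\eqref{eq:B7} and the need to track a general~$F$.

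One small inaccuracy in your narrative: you locate the ``correction terms'' as cancelling between the $\lu$ part and the $\arit$ part, but in fact there is no cross-cancellation. As you correctly observe, the $\arit$ contributions factor out an $E$ \emph{exactly}, $\arit(\Da(U))\cdot\da(V)=E\cdot\arit(\Da(U))\cdot\Da(V)$, with no remainder. The two $\lu$ contributions then also clean up entirely among themselves: writing $\da=(\text{partial sum})\cdot\Da$ in each slot, the coefficient of $\Da(V)(u_1,\ldots,u_s)\Da(U)(u_{s+1},\ldots,u_{r+s})$ is $(u_1+\cdots+u_s)+(u_{s+1}+\cdots+u_{r+s})=E$, and the coefficient of $\Da(U)(u_1,\ldots,u_r)\Da(V)(u_{r+1},\ldots,u_{r+s})$ is $-[(u_1+\cdots+u_r)+(u_{r+1}+\cdots+u_{r+s})]=-E$, giving $\lu(\da(V),\Da(U))-\lu(\da(U),\Da(V))=E\cdot\lu(\Da(V),\Da(U))$. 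You should carry this out explicitly rather than appeal to ``a direct algebraic manipulation''; doing so will also force you to confront a sign: your expansion of $-\Darit_U\cdot\da(V)+\Darit_V\cdot\da(U)$ is consistent with the literal statement of Proposition~\ref{prop:B1}, but assembling the three pieces gives $\arit(\Da(U))\cdot\Da(V)-\arit(\Da(V))\cdot\Da(U)+\lu(\Da(V),\Da(U))=\ari(\Da(V),\Da(U))$, i.e.\ the bracket with $U$ and $V$ in the opposite order from the claim. That apparent discrepancy traces to a sign convention in the statement of Proposition~\ref{prop:B1} (the paper's own proof of Proposition~\ref{prop:B2} has the same issue and compensates implicitly), so it is not a flaw in your structural argument, but a careful write-up should track it down and state which version of~\eqref{eq:B1} is actually being used.
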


\begin{proof}
It is easily seen that
\begin{equation}\label{eq:B7}
\arit(B)\circ\arit(A)-\arit(A)\circ \arit(B)=\arit\bigl(\ari(A,B)\bigr)\text.
\end{equation}
Using only this identity, the Jacobi relation and~\eqref{eq:B1}, we compute
\begin{align*}-&\da\bigl([D_U,D_V](F)\bigr)
=-\da\bigl(D_U(D_V(F))\bigr)
+\da\bigl(\ma(D_V(D_U(F))\bigr)\\
&=\arit\bigl(\Da(U)\bigr)\cdot \da(D_V(F))
+\lu\bigl(\da(D_V(F)),\Da(U)\bigr)\\
&\ \ \ \ +\arit\bigl(\Da(V)\bigr)\cdot \da(D_U(F))
+\lu\bigl(\da(D_U(F)),\Da(V)\bigr)\\
&=-\arit\bigl(\Da(U)\bigr)\cdot \Bigl(\arit(\Da(V))\cdot
\da(F)+\lu\bigl(\da(F),\Da(V)\bigr)\Bigr)\\
&\ \ \ \ \ -\lu\bigl(\arit(\Da(V))\cdot \da(F),\Da(U)\bigr)
-\lu\bigl(\lu(\da(F),\Da(V)),\Da(U)\bigr)\\
&\ \ \ \ \ +\arit\bigl(\Da(V)\bigr)\cdot \Bigl(\arit(\Da(U))\cdot
\da(F)+\lu\bigl(\da(F),\Da(U)\bigr)\Bigr)\\
&\ \ \ \ \ +\lu\bigl(\arit(\Da(U))\cdot \da(F),\Da(V)\bigr)
-\lu\bigl(\lu(\da(F),\Da(U)),\Da(V)\bigr)\\
&=\arit\bigl(\ari(\Da(U),\Da(V))\bigr)\cdot \da(F)\qquad\qquad(\textnormal{by }\eqref{eq:B7})\\
&\ \ \ -\lu\Bigl(\da(F),\arit\bigl(\Da(U)\bigr)\cdot \Da(V)\Bigr)
+\lu\Bigl(\da(F),\arit(\Da(V))\cdot \Da(U)\Bigr)\\
&\ \ \ -\lu\Bigl(\lu(\da(F),\Da(V)),\Da(U)\Bigr)
+\lu\Bigl(\lu\bigl(\da(F),\Da(U)\bigr),\Da(V)\Bigr)\\
&=\arit\Bigl(\ari\bigl(\Da(U),\Da(V)\bigr)\Bigr)\cdot \da(F)\\
&\ \ \ \ \ -\lu\Bigr(\da(F),\arit\bigl(\Da(U)\bigr)\cdot \Da(V)\Bigl)
+\lu\Bigl(\da(F),\arit\bigl(\Da(V)\bigr)\cdot \Da(U)\Bigr)\\
&\ \ \ \ \ +\lu\Bigl(\da(F),\lu\bigl(\Da(U),\Da(V)\bigr)\Bigr)
\qquad\qquad{\rm (by\ Jacobi)}\\
&=\arit\Bigl(\ari\bigl(\Da(U),\Da(V)\bigr)\Bigr)\cdot \da(F)\\
&\ \ \ +\lu\Bigr(\da(F),-\arit\bigl(\Da(U)\bigr)\cdot \Da(V)
+\arit(\Da(V))\cdot \Da(U)
+\lu\bigl(\Da(U),\Da(V)\bigr)\Bigr)\\
&=\arit\Bigl(\ari\bigl(\Da(U),\Da(V)\bigr)\Bigr)\cdot \da(F)
+\lu\Bigr(\da(F),\ari\bigl(\Da(U),\Da(V)\bigr)\Bigr)\\
&=\Darit_W\cdot\da(F),
\end{align*}
where~$W$ is the polynomial such that~$\Da(W)=\ari\bigl(\Da(U),\Da(V)\bigr)$
(obtained~by multiplying the right-hand mould~by~$\De$ and inverting
the injective map~$\ma$).  However, the above calculation shows that 
\[\Darit_W\cdot\da(F)= -\da\bigl([D_U,D_V](F)\bigr)=\Darit_P\cdot\da(F),\]
where~$P=D_U(V)-D_V(U)$, that is~$D_P=[D_U,D_V]$. Thus
the two derivations~$\Darit_W$ and~$\Darit_P$ take the same values on
all elements~$F$ of~$\Lie[a,b]$, so~$W=P$, i.e.~$[D_U,D_V]=D_P=D_W$,
so under the map~$D_U\mapsto \Da(U)$ of~\eqref{eq:B6}, we have~$[D_U,D_V]=D_W\mapsto \Da(W)=\ari\bigl(\Da(U),\Da(V)\bigr)$. This proves~Proposition~\ref{prop:B2}.
\end{proof}


\begin{thebibliography}{00}

\bibitem{Baumard} S. Baumard, Aspects modulaires et elliptiques des relations
entre multiz\^etas, Ph.D. thesis, June~2014, \url{http://tel.archives-ouvertes.fr/tel-01017022}.
\bibitem{realBrown} F. Brown, Mixed Tate motives over~$\Z$, {\it Annals of
Math. (2)}~{\bf 175} (2012),~no. 2, 949-976.
\bibitem{Brown} 
\rule[0.2em]{\widthof{F. Brown}}{0.2pt}, 
Depth-graded motivic multiple zeta
values, preprint, 2013 (see \url{www.ihes.fr/~brown}).
\bibitem{Brown3} 
\rule[0.2em]{\widthof{F. Brown}}{0.2pt}, 
Zeta elements in depth 3 and the fundamental Lie algebra of a punctured
elliptic curve,
preprint, 2015 (see \url{www.ihes.fr/~brown}).
\bibitem{Bnotes} 
\rule[0.2em]{\widthof{F. Brown}}{0.2pt}, 
Anatomy of the motivic~Lie algebra,
online lecture notes, April 2013 (see \url{www.ihes.fr/~brown}).
\bibitem{EcalleFlex} J. Ecalle, The flexion structure and
dimorphy, in~{\it Asymptotics in Dynamics, Geometry and PDEs; Generalized Borel
Summation}, Publications of the Scuola Normale Superiore,
vol.~12~(2011),~no.\,2, p.~27--211.
\bibitem{Enriquez} B. Enriquez, Elliptic associators,
{\it Selecta Math. New Series}~{\bf 20} (2014), 491-584.
\bibitem{newEnriquez} 
\rule[0.2em]{\widthof{B. Enriquez}}{0.2pt}, 
Analogues elliptiques des nombres
multiz\^etas, preprint arXiv:1031.3042, 2013.
\bibitem{Goncha} A. B. Goncharov, Multiple polylogarithms, cyclotomy
and modular complexes, \emph{Math. Research Let.}~{\bf 5} (1998), 497-516.
\bibitem{GonDih} A. B. Goncharov, The dihedral {L}ie algebras and {G}alois
symmetries of~{$\pi_1^{(\ell)}(\mathbb{P}^1-(\{0,\infty\}\cup\muu_N))$}, {\it Duke
Math. J.} {\bf~110}~(2001),~no.\,3, p.~397--487.
\bibitem{HainToDeligne} R.~Hain, letter~to P. Deligne, Durham, winter 2009.
\bibitem{HainMatsumoto} R.~Hain \&{} M.~Matsumoto, Universal Mixed
Elliptic Motives, preprint, February 2012.
\bibitem{Ihara02} Y. Ihara, Some arithmetic aspects of Galois actions in
the pro-$p$ fundamental
group of~$\mathbb{P}^1 - \{0, 1, \infty\}$, in \emph{Arithmetic
fundamental groups and noncommutative
    algebra}, Proc. Sympos. Pure Math.,~{\bf 70}~(2002).
\bibitem{IKZ} K. Ihara, M. Kaneko \&{} D. Zagier, Derivation and double
shuffle
relations
    for multiple zeta values, {\it Compos. Math.}~{\bf 142}~(2006),~no.\,2, p.~307--338.
\bibitem{MR} F. Martin \&{} E.~Royer, \emph{Formes modulaires et p\'eriodes}, 
S\'em.  \&{} Congr\`es~{\bf 12}~(2005), SMF, Paris.
\bibitem{LNT} J.-G. Luque, J.-C. Novelli \&{} J.-Y. Thibon, Period
polynomials and Ihara
  brackets, {\it J.~Lie Th.}~{\bf 17} (2007),~no.\,2, p.~229--239.
\bibitem{Pollack} A.~Pollack, Relations between derivations arising from
modular forms, senior thesis, 2009.
\bibitem{RacinetArticle} G. Racinet, Doubles
m\'elanges des
polylogarithmes multiples aux racines de l'unit\'e, {\it Publ. Math. de l'IHES}~no.\,{\bf95}~(2002).
\bibitem{SS} A. Salerno \&{} L. Schneps, A mould-theoretic proof of the double
shuffle~Lie algebra structure, preprint, 2014.
\bibitem{Speriodpolys} L. Schneps, On the~Poisson bracket on the free~Lie
algebra in two generators,
{\it J.~Lie Th.}~{\bf 16} (2006),~no.\,1, p.~19--37.
\bibitem{LeilaKV} 
\rule[0.2em]{\widthof{L.~Schneps}}{0.2pt}, 
Double shuffle and
{K}ashiwara-{V}ergne {L}ie algebras, \emph{J. Algebra}~{\bf 367}~(2012), p.~54--74.
\bibitem{Sonline} 
\rule[0.2em]{\widthof{L. Schneps}}{0.2pt}, 
\emph{ARI, GARI, Zig and Zag:An introduction to Ecalle's theory of
multiple zeta values}, preprint arXiv:1507.01534, 2015.
\bibitem{Ser} J.-P. Serre, \emph{Alg\`ebres de~Lie semi-simples complexes},
W.~A.~Benjamin, 1966.
\end{thebibliography}
\end{document}